\documentclass[11pt]{article}

\usepackage{geometry}
\usepackage{amsthm, amsmath, amsfonts, amssymb, mathtools, mathrsfs}
\usepackage{url, hyperref}
\usepackage{xcolor}
\usepackage{stmaryrd}   
\usepackage{bbm}        

\newtheorem{theorem}{Theorem}[section]
\newtheorem*{theorem*}{Theorem}
\newtheorem{lemma}[theorem]{Lemma}

\newtheorem{corollary}[theorem]{Corollary}

\theoremstyle{remark}
\newtheorem{definition}[theorem]{\bf Definition}
\newtheorem{remark}[theorem]{\bf Remark}

\newtheorem{question}[theorem]{\bf Question}

\makeatletter
\newcommand{\subjclass}[2][2020]{%
	\let\@oldtitle\@title%
	\gdef\@title{\@oldtitle\footnotetext{#1 \emph{Mathematics subject classification}: #2}}%
}
\newcommand{\keywords}[1]{%
	\let\@@oldtitle\@title%
	\gdef\@title{\@@oldtitle\footnotetext{\emph{Key words and phrases}: #1}}%
}
\makeatother
\renewcommand{\ge}{\geqslant}
\renewcommand{\le}{\leqslant}

\DeclareMathOperator{\LiA}{Li_A}
\DeclareMathOperator{\wt}{wt}
\DeclareMathOperator{\dep}{dep}
\DeclareMathOperator{\trdeg}{trdeg}

\newcommand{\bbF}{\mathbb{F}}
\newcommand{\bbZ}{\mathbb{Z}}

\newcommand{\cZ}{\mathcal{Z}}
\newcommand{\cI}{\mathcal{I}}

\newcommand{\fs}{\mathfrak{s}}
\newcommand{\fh}{\mathfrak{h}}

\newcommand{\ft}{\mathfrak{t}}

\newcommand{\sS}{\mathscr{S}}

\newcommand{\bS}{\boldsymbol{S}}
\newcommand{\brho}{\boldsymbol{\rho}}

\newcommand{\sfE}{\mathsf{E}}
\newcommand{\sfC}{\mathsf{C}}

\numberwithin{equation}{section}

\allowdisplaybreaks

\title{A polynomial basis for the multizeta algebra in positive characteristic}
\author{Li Lai}
\date{}

\begin{document}
\maketitle

\begin{abstract}
Let $K=\mathbb{F}_q(\theta)$, and let $\mathcal{Z}$ be the $K$-algebra generated by Thakur's multiple zeta values $\zeta_A(\fs)$.
We prove that $\mathcal{Z}$ is a polynomial algebra and construct an explicit polynomial basis of $\mathcal{Z}$ over $K$.
We also determine the transcendence degree of $K[\zeta_A(\mathfrak{s}): \wt(\mathfrak{s}) \leqslant w]$ over $K$ for every integer $w \ge 1$, generalizing a result of Ngo Dac--Nguyen Chu--Pham.

The proof uses Chang's grading theorem, the linear basis theorem proved independently by Chang--Chen--Mishiba and Im--Kim--Le--Ngo Dac--Pham, and the carry relations of Im--Kim--Ngo Dac. 
The key step is to show that suitable derivations obtained from deconcatenation and linear functionals descend through the carry relations to $\mathcal{Z}$.
\end{abstract}

\section{Introduction}
\subsection{Background}

The arithmetic nature of special values of the Riemann zeta function
\[
\zeta(s) \coloneq \sum_{n=1}^{\infty} \frac{1}{n^s} \quad (s=2,3,\ldots)
\]
has attracted considerable attention.
Euler proved that $\zeta(s)/(2\pi\sqrt{-1})^s \in \mathbb{Q}^{\times}$ for any positive even integer $s$. 
A folklore conjecture predicts that $\pi,\zeta(3),\zeta(5),\zeta(7),\ldots$ are algebraically independent over $\mathbb{Q}$, but little is known.
See the classical results \cite{Apery1979,BR2001,Zudilin2001} and some recent developments \cite{Fischler2026,Lai2026}, to mention just a few.

In 1935, Carlitz \cite{Carlitz1935} introduced analogues of Riemann zeta values in positive characteristic.
Let $A=\bbF_q[\theta]$ be the polynomial ring in one indeterminate $\theta$ over a finite field $\bbF_q$ of characteristic $p>0$.
Denote by $A_+$ the set of monic polynomials in $A$.
Let $K=\bbF_q(\theta)$ be the fraction field of $A$ and $K_{\infty}=\bbF_q(\!(1/\theta)\!)$ the completion of $K$ at $\infty$.
The Carlitz zeta values are defined for $s=1,2,\ldots$ by
\[
\zeta_A(s) \coloneq \sum_{a \in A_+} \frac{1}{a^s} \in K_{\infty}. 
\]
Carlitz proved that $\zeta_A(s)/\widetilde{\pi}^s \in K^{\times}$ for any positive integer $s$ such that $q-1 \mid s$, where $\widetilde{\pi} \in \overline{K_{\infty}}^{\textup{alg}}$ is an analogue of $2\pi\sqrt{-1} \in \mathbb{C}$.
A breakthrough was achieved by Yu \cite{Yu1991} in 1991; he showed that $\zeta_A(s)$ is transcendental over $K$ for any positive integer $s$, and $\zeta_A(s)/\widetilde{\pi}^s$ is transcendental over $K$ for any positive integer $s$ such that $q-1 \nmid s$.

In 2007, Chang and Yu \cite{CY2007} determined all algebraic relations among Carlitz zeta values. 
They proved the following theorem.
Let $\bbZ_+$ be the set of positive integers.

\begin{theorem}[Chang--Yu \cite{CY2007}]\label{thm_CY} 
The set
\[
\{ \widetilde{\pi} \} \bigcup \Big\{ \zeta_A(s) : s \in \bbZ_+, p \nmid s, q-1 \nmid s  \Big\}
\]
is algebraically independent over $K$.
Moreover, for any $w \ge 1$, 
\[
K\left[ \widetilde{\pi},\zeta_A(1),\zeta_A(2),\ldots,\zeta_A(w)\right] = K\left[ \widetilde{\pi}, \zeta_A(s) : 1 \le s \le w, p \nmid s, q-1 \nmid s\right].
\]
In particular, the transcendence degree of (the fraction field of) $K\left[ \widetilde{\pi},\zeta_A(1),\zeta_A(2),\ldots,\zeta_A(w)\right]$ over $K$ is 
\[
w - \left\lfloor \frac{w}{p} \right\rfloor - \left\lfloor \frac{w}{q-1} \right\rfloor + \left\lfloor \frac{w}{p(q-1)} \right\rfloor + 1.
\] 
\end{theorem}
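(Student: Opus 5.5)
The proof has two parts: an elementary part identifying the relations among the $\zeta_A(s)$, and a transcendence part showing there are no other relations. Together they give the equality of algebras, and the transcendence degree then follows by counting.

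\emph{Elementary relations.} First I show that every $\zeta_A(s)$ with $s\le w$ lies in $K[\widetilde{\pi},\zeta_A(s'): s'\le w,\ p\nmid s',\ q-1\nmid s']$. Two relations do this.
\begin{itemize}
\item The Frobenius relation $\zeta_A(ps)=\zeta_A(s)^p$. It holds because $a\mapsto a^p$ is additive in characteristic $p$, so $\sum_a a^{-ps}=(\sum_a a^{-s})^p$, with convergence in $K_\infty$.
\item Carlitz's theorem $\zeta_A(s)\in K\,\widetilde{\pi}^s$ when $q-1\mid s$.
\end{itemize}
Given $s\le w$, write $s=p^k s_0$ with $p\nmid s_0$, so $\zeta_A(s)=\zeta_A(s_0)^{p^k}$ and $s_0\le w$. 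If $q-1\mid s_0$, then $\zeta_A(s_0)\in K[\widetilde{\pi}]$. Otherwise $\zeta_A(s_0)$ is itself a generator. This gives the displayed equality.

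\emph{Algebraic independence.} This is the heart of the theorem. I would use the Anderson--Brownawell--Papanikolas criterion together with Papanikolas' theorem that the transcendence degree of the period field of a rigid analytic trivialization equals the dimension of its Tannakian (difference) Galois group.
\begin{itemize}
\item Following Anderson--Thakur, realize $\zeta_A(s)$, up to the explicit factor $\Gamma_s$, as a coordinate of the logarithm of the $s$-th tensor power of the Carlitz module evaluated at a special algebraic point. Equivalently, build a $t$-motive $M_s$: an extension of the trivial motive by $C^{\otimes s}$ whose period matrix contains $\widetilde{\pi}^s$ and the Anderson--Thakur series $L_s(t)$ specializing to $\zeta_A(s)$ at $t=\theta$.
\item Take the direct sum $M=\bigoplus_{s} M_s$ over $s\le w$ with $p\nmid s$, $q-1\nmid s$. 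Its Galois group $\Gamma_M$ sits in an extension of $\mathbb{G}_m$, which accounts for $\widetilde{\pi}$, by a vector group $\mathbb{G}_a^n$ with $n\le$ the number of such $s$.
\item It remains to show the unipotent part is full, i.e.\ $\dim\Gamma_M = 1+n$. Via Hardouin's/Papanikolas' description, a proper subgroup would yield a nontrivial $\bar K(t)$-linear relation, compatible with the $\mathbb{G}_m$-weights, among the extension classes. Because the weights $s$ differ, the $\mathbb{G}_m$-action splits such a relation into single-weight pieces, each forcing a single class to be trivial.
\item Triviality of the class of $M_s$ means $\zeta_A(s)/\widetilde{\pi}^s\in K$. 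Yu's theorem rules this out when $q-1\nmid s$.
\end{itemize}
Some care is needed with $p$-th powers, since Frobenius twisting can identify extension classes in different weights $s$ and $ps$. This is exactly why the indices are restricted to $p\nmid s$: in the relevant weights the classes stay linearly independent over $\mathbb{F}_q(t)$.

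\emph{Counting.} By inclusion--exclusion, the number of $s\le w$ with $p\nmid s$ and $q-1\nmid s$ is
\[
w-\lfloor w/p\rfloor-\lfloor w/(q-1)\rfloor+\lfloor w/(p(q-1))\rfloor,
\]
because $\gcd(p,q-1)=1$ gives $\operatorname{lcm}(p,q-1)=p(q-1)$. Adding $1$ for $\widetilde{\pi}$ gives the stated transcendence degree. The main obstacle is the Galois-group computation in the independence step: verifying that $\Gamma_M$ has full dimension. The rest is bookkeeping.
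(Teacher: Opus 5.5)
The paper does not prove this statement; it quotes it from Chang--Yu \cite{CY2007}. Your outline is essentially their route: Frobenius and Carlitz relations give the equality of algebras; Anderson--Thakur $t$-motives, the ABP criterion and Papanikolas' dimension theorem give the independence; Yu's theorem closes the argument. The elementary reduction and the inclusion--exclusion count are correct.

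The gap is in the step you yourself call the heart of the matter. You claim that a proper Galois group yields a \emph{linear} relation among the extension classes, which the $\mathbb{G}_m$-weights then split into single-weight pieces. That argument uses only that the weights are distinct. So it would equally ``prove'' that $\widetilde{\pi}$, $\zeta_A(s)$, $\zeta_A(ps)$ are algebraically independent, which is false since $\zeta_A(ps)=\zeta_A(s)^p$.

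Concretely, take $M_s\oplus M_{ps}$ with $q-1\nmid s$. The kernel $V\subset\mathbb{G}_a^2$ of $\Gamma_M\to\mathbb{G}_m$ is one-dimensional. Yet it surjects onto each coordinate, because each $M_s$ alone has a two-dimensional group. So $V$ is not a $\mathbb{G}_m$-stable line but an additive curve of the shape $y=cx^p$. The group lives over $\bbF_q(t)$, not $\overline{K}(t)$. In characteristic $p$, closed subgroups of $\mathbb{G}_a^n$ are cut out by additive polynomials, not by linear forms.

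Your closing remark, that $p\nmid s$ keeps ``the classes linearly independent over $\bbF_q(t)$,'' does not repair this. The classes of $M_s$ and $M_{ps}$ lie in different $\operatorname{Ext}$ groups anyway. The obstruction is the nonlinearity of $V$, not linear dependence of classes.

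What is needed is the following. After base change to an algebraic closure, the ideal of $V$ is generated by additive polynomials $\sum_{i,k}c_{i,k}x_i^{p^k}$. This ideal is graded for the $\mathbb{G}_m$-action, under which $x_i^{p^k}$ has weight $\pm p^k s_i$. Since the ideal is graded, it contains the weight-homogeneous components of its generators.

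Because the $s_i$ are distinct and prime to $p$, the integers $p^k s_i$ are pairwise distinct. So each such component is a single monomial $c\,x_i^{p^k}$, and $V_{\mathrm{red}}$ is a coordinate subspace. This is exactly where $p\nmid s$ enters.

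If coordinate $i$ is missing from $V_{\mathrm{red}}$, the image of $\Gamma_M$ in $\mathrm{GL}(M_{s_i})$ is a torus. Hence $M_{s_i}$ is semisimple and the extension splits. This gives $\zeta_A(s_i)/\widetilde{\pi}^{s_i}\in\overline{K}$. You wrote $K$, but Yu's transcendence theorem excludes $\overline{K}$ just as well when $q-1\nmid s_i$.
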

Thus, we have a satisfactory understanding of the algebraic relations among Carlitz zeta values.

Since the 1990s, multiple zeta values (MZVs) have attracted increasing interest.
For positive integers $s_1,\ldots,s_k$ with $s_1 > 1$, the multiple zeta value $\zeta(s_1,\ldots,s_k)$ is defined by
\[
\zeta(s_1,\ldots,s_k) \coloneq \sum_{n_1 > n_2 > \cdots > n_k \ge 1} \frac{1}{n_1^{s_1}n_2^{s_2}\cdots n_k^{s_k}} \in \mathbb{R}.
\]
We refer the reader to the monograph \cite{BF} for a systematic introduction to MZVs.

In 2004, Thakur \cite{Thakur2004} introduced analogues of MZVs in positive characteristic. 
Let $\cI$ denote the disjoint union $\{\emptyset\} \cup \bigcup_{k=1}^{\infty} \bbZ_+^k$. 
Elements of $\cI$ are called tuples.
For any tuple $\fs =(s_1,\ldots,s_k) \in \cI$, the \emph{depth} $\dep(\fs)$ and \emph{weight} $\wt(\fs)$ are defined by $\dep(\fs) = k$ and $\wt(\fs)=\sum_{i=1}^{k} s_i$, with the conventions $\dep(\emptyset)=0$ and $\wt(\emptyset)=0$.
Let $\cI_{>0} = \cI \setminus \{\emptyset\}$ be the set of nonempty tuples.
For any $\fs=(s_1,\ldots,s_k) \in \cI_{>0}$, Thakur's MZV is defined by
\[
\zeta_A(\fs) \coloneq \sum_{\substack{a_1, a_2, \ldots, a_k \in A_+ \\  \deg a_1 > \deg a_2 > \cdots > \deg a_k}} \frac{1}{a_1^{s_1}a_2^{s_2}\cdots a_k^{s_k}} \in K_{\infty}.
\]
By convention, $\zeta_A(\emptyset) \coloneq 1$.
For any $w \ge 0$, let $\cI_w$ denote the set of tuples of weight $w$.
Define the $K$-linear spaces
\[
\cZ \coloneq \operatorname{Span}_K\left\{ \zeta_A(\fs) : \fs \in \cI \right\} \subset K_{\infty}, \quad\text{and}\quad \cZ_{w} \coloneq \operatorname{Span}_K\left\{ \zeta_A(\fs) : \fs \in \cI_w \right\} \subset K_{\infty}, w \ge 0.
\]
Thakur proved that $\zeta_A(\fs_1)\cdot\zeta_A(\fs_2) \in \cZ_{\wt(\fs_1)+\wt(\fs_2)}$ for any $\fs_1,\fs_2 \in \cI$.
After Chen's work \cite{Chen2015}, these product relations were made explicit in the form $\zeta_A(\fs_1)\cdot\zeta_A(\fs_2) = \zeta_A(\fs_1 *^{\zeta} \fs_2)$; see \cite{Thakur2017}. 
We call them $q$-shuffle relations.
Therefore, $(\cZ,+,\cdot)$ is a $K$-algebra.

A basic question is to determine all algebraic relations among Thakur's MZVs. 
A major advance was made by Chang \cite{Chang2014} in 2014:
\begin{theorem}[Chang \cite{Chang2014}]\label{thm_Chang}
$\cZ$ is a graded algebra over $K$; that is, $\cZ = \bigoplus_{w=0}^{\infty} \cZ_w$.
\end{theorem}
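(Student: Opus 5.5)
The plan is to follow Chang's approach, which rests on the Anderson--Brownawell--Papanikolas (ABP) linear independence criterion for values of rigid analytic solutions of Frobenius difference equations. Since Thakur's product formula already gives $\cZ_{w_1}\cdot\cZ_{w_2}\subset\cZ_{w_1+w_2}$, the only thing to prove is that the sum $\sum_w \cZ_w$ is direct. That is, if $\sum_{w=0}^{N} x_w=0$ with $x_w\in\cZ_w$, then each $x_w=0$.

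\textbf{Step 1: realize $\zeta_A(\fs)$ as a period.} For each tuple $\fs$ I would use Anderson--Thakur polynomials $H_{s-1}\in A[t]$ to write $\zeta_A(\fs)$, up to an explicit nonzero factor $\Gamma$-type constant in $K^\times$, as the value at $t=\theta$ of a Carlitz-type multiple polylogarithm series. This series $\mathcal L_{\fs}(t)$ lies in the Tate algebra and satisfies a first order system
\[
\psi^{(-1)}=\Phi\psi,
\]
where $\Phi$ is lower triangular with entries in $\overline{K}[t]$ and $(t-\theta)^{w}$-type powers on the diagonal. Here the twist $(-1)$ denotes inverse Frobenius twisting. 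This gives, for each weight $w$, a $t$-motive whose period vector contains $\widetilde{\pi}^{\,w}$-normalized MZVs of weight $w$. Dividing by $\widetilde\pi^{w}$, i.e. working with $\Omega^{w}$ where $\Omega^{(-1)}=(t-\theta)\Omega$, makes a weight-$w$ MZV correspond to a solution of an equation of the shape $f^{(-1)}=(t-\theta)^{w}(\cdots)$.

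\textbf{Step 2: set up the linear relation as a difference equation.} Suppose $\sum_w x_w=0$, with $x_w=\sum_{\fs\in\cI_w}c_{\fs}\zeta_A(\fs)$ and $c_{\fs}\in K$. Consider a block-diagonal system collecting the vectors for all tuples of weight $\le N$, together with the constant $1$. Apply the ABP criterion: a $\overline K$-linear relation among the values at $t=\theta$ of the entries of a solution vector lifts to a $\overline K[t]$-linear relation $\sum f_{\fs}(t)\,\Omega^{\wt(\fs)}\mathcal L_{\fs}(t)+f_0(t)=0$ among the functions themselves, whose specialization at $t=\theta$ is the given relation. The key point is that this lift can be chosen compatible with the difference equation.

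\textbf{Step 3: separate weights.} This is the main obstacle. Apply the Frobenius twist repeatedly to the functional relation. A weight-$w$ piece picks up the factor $(t-\theta)^{w}$ relative to its lower-depth correction terms, because of the lower triangular shape of $\Phi$. Comparing orders of vanishing at $t=\theta^{q^i}$ (equivalently, the zeros of $\Omega$) should then force the relation to split into separate relations, one for each weight. I expect the delicate part to be the induction on depth, which is needed to handle the off-diagonal (lower-depth) terms. My first attempt would be a minimal counterexample argument: take a relation involving the fewest weights and the smallest maximal depth. Subtract its twisted version, multiplied by the appropriate power of $(t-\theta)$, so as to kill the top-weight leading terms. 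This produces a shorter relation, contradicting minimality, unless the relation is already homogeneous.

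\textbf{Step 4: conclude.} Specializing the homogeneous pieces at $t=\theta$ gives $x_w=0$ for every $w$. Hence the sum is direct and $\cZ=\bigoplus_w\cZ_w$.
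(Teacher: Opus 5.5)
The paper does not prove this statement; it quotes it from Chang. Your overall route (Anderson--Thakur series plus the Anderson--Brownawell--Papanikolas criterion) is Chang's. However, Steps 2--3 contain a genuine gap: the normalization is wrong, and in your normalization the weight-splitting you want in Step 3 is actually false.

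You attach $\Omega^{\wt(\fs)}$ to each $\mathcal{L}_{\fs}$ and keep the constant $1$. The values at $t=\theta$ of your entries are therefore, up to $K^{\times}$-factors, the numbers $\zeta_A(\fs)/\widetilde{\pi}^{\wt(\fs)}$ and $1$. The relation $\sum_w x_w=0$ is not a $\overline{K}$-linear relation among these numbers, since its coefficients would have to involve the powers $\widetilde{\pi}^{w}$. So the lifted identity cannot specialize to ``the given relation''.

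Worse, normalized values of different weights really are $K$-linearly dependent. By Carlitz, $\zeta_A(q-1)/\widetilde{\pi}^{q-1}\in K^{\times}$. The ABP criterion then yields $f_0+f_1\Omega^{q-1}+f_2\,\Omega^{q-1}\mathcal{L}_{(q-1)}=0$ with $f_0(\theta)\neq 0\neq f_2(\theta)$, a relation mixing weights $0$ and $q-1$. So no twist-and-subtract or minimal-counterexample argument in Step 3 can split lifted relations by weight in this setup.

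Relatedly, the zeros of $\Omega$ do not detect weight in your setup. The entire function $\Omega^{w}\mathcal{L}_{\fs}$ satisfies $(\Omega^{w}\mathcal{L}_{\fs})(\theta^{q^n})=\big((\Omega^{w}\mathcal{L}_{\fs})(\theta)\big)^{q^n}\neq 0$. The reason is that the order-$w$ zero of $\Omega^{w}$ at $\theta^{q^n}$ is cancelled by a pole of $\mathcal{L}_{\fs}$.

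The missing idea is Chang's \emph{uniform} normalization, combined with evaluation at $t=\theta^{q^n}$. Let $N$ be the top weight and divide the relation by $\widetilde{\pi}^{N}$. For the weight-$w$ block, use $\Omega^{N-w}\psi_w$ with $(t-\theta)^{N-w}\Phi_w$, and use $\Omega^{N}$ instead of $1$. Now $\sum_w x_w/\widetilde{\pi}^{N}=0$ is a genuine $K$-linear relation among values at $\theta$. The weight $w$ is recorded as the extra vanishing order $N-w$ at each $\theta^{q^n}$, $n\ge 1$.

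Since $x_N=\sum c_{\fs}\zeta_A(\fs)$ with $c_{\fs}\in K$, substitute $\theta\mapsto t$ in $c_{\fs}$ and in the $\Gamma$-factors. After clearing a common denominator, this gives twist-invariant coefficients $b_{\fs}(t)\in\mathbb{F}_q[t]$. Build the top block so that its last entry is $E=\sum_{\fs}b_{\fs}(t)\,\Omega^{N}\mathcal{L}_{\fs}$. This system is still of ABP type, $E(\theta)\in K^{\times}\cdot x_N/\widetilde{\pi}^{N}$, and $E(\theta^{q^n})=E(\theta)^{q^n}$.

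Now evaluate the ABP lift $F\psi=0$ at $t=\theta^{q^n}$. Every entry carrying a positive power of $\Omega$ vanishes there, which disposes of all lower-depth entries and all blocks of weight $<N$ at once. What remains is $F_E(\theta^{q^n})E(\theta)^{q^n}=0$ for all $n\ge 1$. If $x_N\neq 0$, the polynomial $F_E$ has infinitely many zeros, so $F_E=0$, contradicting $F_E(\theta)\neq 0$. Hence $x_N=0$, and induction on $N$ finishes the proof.

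No induction on depth is needed. What your sketch lacks is the uniform power $\Omega^{N}$, together with the Frobenius compatibility at $\theta^{q^n}$ with $\mathbb{F}_q(t)$-coefficients, which is what lets you return from $\theta^{q^n}$ to $\theta$.
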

Chang's theorem shows that every algebraic relation among Thakur's MZVs follows from the $q$-shuffle relations and $K$-linear relations among values of the same weight.
Todd \cite{Todd2018} and Thakur \cite{Thakur2017} formulated a $K$-linear basis conjecture for the spaces $\cZ_w$.
In 2021, Ngo Dac \cite{Ngo2021} proved the spanning part of this
conjecture, and made important progress toward proving linear independence.
It was subsequently proved in full by Im--Kim--Le--Ngo Dac--Pham \cite{IKLNP2024} and, independently, by Chang--Chen--Mishiba \cite{CCM2023}.
Define  
\[
[q]^{\bullet} \coloneq \{\emptyset\} \bigcup \Big\{ (s_1,\ldots,s_k) \in \cI_{>0} : k \in \bbZ_+, 1 \le s_1,\ldots,s_k \le q \Big\}.
\]
\begin{theorem}[Chang--Chen--Mishiba \cite{CCM2023}, independently Im--Kim--Le--Ngo Dac--Pham \cite{IKLNP2024}]\label{thm_CCMIKLNP}
For any weight $w \ge 0$, we have $\dim_{K} \cZ_w = d_w$, where the sequence $\{d_w\}_{w \ge 0}$ is defined by the generating function 
\begin{equation}\label{def_dn}
\sum_{w=0}^{\infty} d_w X^w = \frac{1-X^{q}}{1-X-X^2-\cdots-X^{q}}.
\end{equation}
Moreover, $\left\{ \zeta_A(\fs) : \fs \in \mathcal{B}_w^{\textup{T}} \right\}$ is a $K$-basis of $\cZ_w$, where $\mathcal{B}_0^{\textup{T}} = \{\emptyset\}$ and
\begin{equation}\label{def_TT}
\mathcal{B}_w^{\textup{T}} = \left\{ (\fh,z) \in \cI_w : \fh \in [q]^{\bullet}, 1 \le z \le q-1 \right\}, \quad w \ge 1.
\end{equation}
\end{theorem}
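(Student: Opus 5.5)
This is a deep theorem that we are citing, so the plan below splits it into a spanning half and an independence half. We also check that the candidate basis has the right size.

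\emph{Counting.} Write $b_w = \#\mathcal{B}_w^{\textup{T}}$. Tuples in $[q]^{\bullet}$ of weight $n$ have generating function $1/(1-X-\cdots-X^q)$, since they are compositions into parts $\le q$. A basis tuple is $(\fh,z)$ with $1\le z\le q-1$. So for $w\ge1$ the generating function of $b_w$ is $(X+\cdots+X^{q-1})/(1-X-\cdots-X^q)$, and adding $1$ for $w=0$ gives
\[
\sum_{w\ge 0} b_w X^w = 1+\frac{X+\cdots+X^{q-1}}{1-X-\cdots-X^{q}} = \frac{1-X^q}{1-X-\cdots-X^q}.
\]
This matches \eqref{def_dn}. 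The theorem therefore reduces to two statements: $\{\zeta_A(\fs):\fs\in\mathcal{B}_w^{\textup{T}}\}$ spans $\cZ_w$, and these values are $K$-linearly independent.

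\emph{Spanning.} Following Ngo Dac, I would use explicit binary relations from Thakur's identities. The basic one is the relation $R_q$ expressing $\zeta_A(\ldots,q,\ldots)$, and the relations for entries $\ge q$, in terms of tuples of the same weight whose initial segments are smaller under a suitable well-founded order. These relations are produced from Todd's and Thakur's formulas for $\zeta_A$ of tuples with an entry $\geq q$, namely expansions of the power sums $S_d(s)$ in terms of $S_d(s-q+1)$ and $S_d(q-1)$. An induction on that order then rewrites every $\zeta_A(\fs)$ of weight $w$ as a $K$-combination of values indexed by $\mathcal{B}_w^{\textup{T}}$. The last entry is forced into $\{1,\dots,q-1\}$ by a carry relation that trades a last entry $q$ for $(q-1,1)$-type terms. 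This gives $\dim_K\cZ_w\le d_w$.

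\emph{Independence.} This is the main obstacle. I would follow Chang--Chen--Mishiba. Realize each $\zeta_A(\fs)$ as a period of an explicit $t$-motive, or rather a special value of a Carlitz multiple polylogarithm via Anderson--Thakur polynomials. Then apply the ABP criterion, in the refined form of Chang, to a putative $K$-linear relation among the basis values. This lifts the relation to one over $\overline{K}[t]$ among the corresponding $t$-deformation series, satisfying a Frobenius difference equation. Comparing the leading coefficients attached to maximal-depth terms, and inducting on depth, forces all coefficients to vanish. The hard part is choosing the basis indexing so that the difference equation triangularizes; the choice of last entry $z\le q-1$ is exactly what makes the Anderson--Thakur polynomial factors nondegenerate. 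With spanning and $b_w=d_w$, this gives $\dim_K\cZ_w=d_w$ and the basis claim.
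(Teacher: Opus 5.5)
Your treatment matches the paper's, which imports this theorem from Chang--Chen--Mishiba and Im--Kim--Le--Ngo Dac--Pham without proof. Your generating-function check $\#\mathcal{B}_w^{\textup{T}}=d_w$ is correct, and, given Theorem~\ref{thm_FpSpan}, the spanning half already follows from the argument of Lemma~\ref{lem2.3}, so only the independence has to be imported.

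One correction to your heuristic. Every basis tuple has all entries $\le q$, where the Anderson--Thakur polynomials are identically $1$ and $\zeta_A(\fs)=\LiA(\fs)$ (Remark~\ref{rmk1.3}), so those polynomials cannot be what singles out $z\le q-1$. Tuples ending in $q$ are excluded because of the carry relation $\brho(\fh;\emptyset)=(\fh,q)+D_1\mathbbm{1}_{\fh\neq\emptyset}\cdot(\fh^{+},q-1)+D_1\cdot(\fh,1,q-1)$; for example, $\zeta_A(q)=-D_1\zeta_A(1,q-1)$.
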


These results mentioned above lead to a natural question: can we generalize Theorem \ref{thm_CY} to the multizeta case? 
The purpose of this paper is to address this question.
See \cite{Mishiba2015,Mishiba2017,NNP2026} for some known results related to this question.
We are also interested in the algebraic structure of $\cZ$; see \cite{IKLNP2023+,NNP2026}.

The closely related Carlitz multiple polylogarithm values (CMPLVs) are useful in the study of Thakur's MZVs.
Following Chang \cite{Chang2014}, we define for each $\fs \in \cI_{>0}$ the CMPLV 
\[
\LiA(\fs) \coloneq \sum_{n_1>n_2>\cdots>n_k \ge 0}  \frac{ 1 }{\ell_{n_1}^{s_1} \ell_{n_2}^{s_2} \cdots \ell_{n_k}^{s_k}} \in K_{\infty},
\]
where $\ell_n = \prod_{i=1}^{n} (\theta-\theta^{q^i})$ for $n \in \bbZ_+$ and $\ell_0=1$.
We adopt the convention $\LiA(\emptyset) \coloneq 1$.

\begin{remark}\label{rmk1.3}
It is well known that $\LiA(\fs)=\zeta_A(\fs)$ for any $\fs \in [q]^{\bullet}$; see Thakur \cite{Thakur2009}.
In particular, $\LiA(\fs)=\zeta_A(\fs)$ for any $\fs \in \mathcal{B}_w^{\textup{T}}$.
Thus, Theorem \ref{thm_CCMIKLNP} implies $\{ \LiA(\fs) : \fs \in \mathcal{B}_w^{\textup{T}} \}$ is a $K$-basis of $\cZ_w$ for any $w \ge 0$.
\end{remark}

The following theorem of Im--Kim--Ngo Dac \cite{IKN2026+} connects Thakur's MZVs with CMPLVs.

\begin{theorem}[{Im--Kim--Ngo Dac \cite{IKN2026+}}]\label{thm_FpSpan}
For any weight $w \ge 0$,
\begin{equation}\label{eqn_FpSpan}
\operatorname{Span}_{\bbF_p}\left\{ \zeta_{A}(\fs) : \fs \in \cI_w  \right\} = \operatorname{Span}_{\bbF_p}\left\{ \LiA(\fs) : \fs \in \cI_w  \right\}.
\end{equation}
As a corollary,
\[
\cZ_w = \operatorname{Span}_{K}\left\{ \LiA(\fs) : \fs \in \cI_w  \right\}.
\]
\end{theorem}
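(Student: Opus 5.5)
The plan is to prove \eqref{eqn_FpSpan} for all $w$ simultaneously by comparing the two families at the level of \emph{fixed-degree} sums. I then show that each family is an $\bbF_p$-unitriangular transform of the other with respect to a suitable order on $\cI_w$. For $\fs=(s_1,\ldots,s_k)$ and $d\ge 0$, put
\[
S_d(\fs)=\sum_{\substack{a_i\in A_+\\ d=\deg a_1>\cdots>\deg a_k}}\frac{1}{a_1^{s_1}\cdots a_k^{s_k}},\qquad
\mathrm{Si}_d(\fs)=\sum_{d=n_1>\cdots>n_k\ge 0}\frac{1}{\ell_{n_1}^{s_1}\cdots\ell_{n_k}^{s_k}},
\]
so that $\zeta_A(\fs)=\sum_d S_d(\fs)$ and $\LiA(\fs)=\sum_d \mathrm{Si}_d(\fs)$. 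The goal is the stronger statement that, for every $d$ and every $\fs\in\cI_w$,
\[
S_d(\fs)\in \operatorname{Span}_{\bbF_p}\{\mathrm{Si}_d(\ft):\ft\in\cI_w\},
\]
with coefficients independent of $d$. Summing over $d$ then gives the inclusion ``$\subseteq$'' in \eqref{eqn_FpSpan}. Triangularity with leading coefficient $1$ gives the reverse inclusion by inverting the unitriangular matrix.

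\textbf{Ingredients.} Two facts do the work.

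First, for $1\le s\le q$ one has $S_d(s)=1/\ell_d^{s}=\mathrm{Si}_d(s)$; this is the depth-one case behind Remark~\ref{rmk1.3}.

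Second, Chen's formula gives products of fixed-degree power sums with $\bbF_p$-coefficients:
\[
S_d(s)S_d(t)=S_d(s+t)+\sum_{j}\Delta^{j}_{s,t}\,S_d(s+t-j,j),\qquad \Delta^j_{s,t}\in\bbF_p.
\]
The coefficients come from binomial coefficients reduced mod $p$.

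\textbf{Depth one.} I first treat $S_d(s)$ by induction on $s$. Writing $s=s'+t$ with $s',t\le q$ (or splitting off a block of size $q$), Chen's formula expresses $S_d(s)$ through two kinds of terms:
\begin{itemize}
\item the product $S_d(s')S_d(t)=1/\ell_d^{s}=\mathrm{Si}_d(s)$;
\item depth-two sums $S_d(s-j,j)$ with $1\le j<s$, each having a smaller first entry.
\end{itemize}
Since $S_d(s-j,j)=S_d(s-j)\,S_{<d}(j)$, and $S_{<d}(j)=\sum_{e<d}S_e(j)$ is handled by the induction hypothesis applied degree by degree, everything recombines into $\mathrm{Si}_d$-sums of the same weight.

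\textbf{Higher depth.} I then pass to arbitrary depth by induction on $(\wt,\text{first entry})$, using
\[
S_d(s_1,\fs')=S_d(s_1)\sum_{e<d}S_e(\fs').
\]
The $\bbF_p$-expansions of the two factors are substituted, and the product $\mathrm{Si}_d(\cdot)\cdot\sum_{e<d}\mathrm{Si}_e(\cdot)$ is again an $\mathrm{Si}_d$ of a concatenated tuple. Tracking leading terms in the order (first entry, then lexicographic) should yield $S_d(\fs)=\mathrm{Si}_d(\fs)+(\text{smaller terms})$.

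\textbf{Main obstacle.} The main obstacle is the depth-one step for $s>q$. There the naive identity $S_d(s)S_d(t)=\ldots$ produces $1/\ell_d^{s}$ only when $S_d(s')$ and $S_d(t)$ are already pure powers of $1/\ell_d$. This requires $s',t\le q$, hence $s\le 2q$. Beyond that, the ``carries'' (Lucas-type behaviour of binomial coefficients at $p$-power boundaries) create correction terms whose weights and shapes must be controlled.

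To handle this, I would first aim to establish an explicit carry identity expressing $S_d(s)$ for general $s$ as an $\bbF_p$-combination of products $\ell_d^{-a}\,S_{<d}(\fs'')$. Here $a\le q$ or $a$ is a $q$-power combination, and I would derive the identity by iterating Chen's formula along the base-$q$ digits of $s$. I expect this bookkeeping, and verifying that the coefficients never leave $\bbF_p$, to be the heart of the proof.

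\textbf{The corollary.} This is immediate. Since $\cZ_w$ is the $K$-span of the $\zeta_A(\fs)$ with $\fs\in\cI_w$, and by \eqref{eqn_FpSpan} each such $\zeta_A(\fs)$ lies in the $\bbF_p$-span (hence the $K$-span) of the $\LiA(\ft)$ with $\ft\in\cI_w$, and conversely, the two $K$-spans coincide.
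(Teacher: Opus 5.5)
\textbf{There is a gap in the depth-one step, and it is easy to close.} Your framework is sound: fixed-degree sums $S_d$ and $\mathrm{Si}_d$ with $d$-independent $\bbF_p$-coefficients, followed by inversion of a unitriangular matrix. But the gap sits exactly at the step you call the main obstacle. The depth-one expansion of $S_d(s)$ for $s>2q$ is never established. The remedy you suggest, an explicit carry identity from iterating Chen's formula along base-$q$ digits, is left unexecuted. Since your higher-depth step uses the depth-one expansions, the argument as written does not prove the statement.

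The missing observation is that only \emph{one} of the two Chen factors needs to be a pure power of $\ell_d^{-1}$. Apply Chen's formula to the pair $(s-1,1)$:
\[
S_d(s)=S_d(s-1)\,\ell_d^{-1}-\sum_{\substack{0<j<s\\ (q-1)\mid j}}\Delta^{j}_{s-1,1}\,S_d(s-j)\,S_{<d}(j).
\]
Suppose by induction that $S_d(s-1)=\sum_{\ft}c_{\ft}\,\mathrm{Si}_d(\ft)$. Then $S_d(s-1)\,\ell_d^{-1}=\sum_{\ft}c_{\ft}\,\mathrm{Si}_d({}^{+}\ft)$, because $\ell_d^{-1}\,\mathrm{Si}_d(t_1,\ft_-)=\mathrm{Si}_d(t_1+1,\ft_-)$. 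Every factor on the right has weight $<s$, so a plain induction on weight handles all $s$ at once. No digit bookkeeping is needed, and the coefficients stay in $\bbF_p$ because the $\Delta^j_{r,s}$ are reductions of integers.

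\textbf{There is also a smaller error in your higher-depth step.} Write $\mathrm{Si}_{<d}=\sum_{e<d}\mathrm{Si}_e$. The product $\mathrm{Si}_d(\ft)\,\mathrm{Si}_{<d}(\ft')$ is $\mathrm{Si}_d$ of a concatenated tuple only when $\dep(\ft)=1$. Higher depths genuinely occur in depth-one expansions: already $S_d(q+1)=\mathrm{Si}_d(q+1)-\mathrm{Si}_d(2,q-1)$. The correct rule uses the stuffle product of truncated sums:
\[
\mathrm{Si}_d(\ft)\,\mathrm{Si}_{<d}(\ft')=\ell_d^{-t_1}\,\mathrm{Si}_{<d}(\ft_-)\,\mathrm{Si}_{<d}(\ft')=\mathrm{Si}_d\bigl((t_1,\ft_-*\ft')\bigr).
\]
This keeps the coefficients in $\bbF_p$ and preserves the first entry, which is what your triangularity needs. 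For the lexicographic order on $\cI_w$, the recursion gives
\[
S_d(\fs)=\mathrm{Si}_d(\fs)+\sum_{\ft<_{\mathrm{lex}}\fs}c_{\fs,\ft}\,\mathrm{Si}_d(\ft).
\]
In the depth-one step, every non-leading tuple has first entry $<s$. In $S_d(s_1,\fs')=S_d(s_1)\,S_{<d}(\fs')$, every non-leading tuple has either first entry $<s_1$, or first entry $s_1$ followed by a lexicographically smaller tail.

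\textbf{Comparison with the paper.} With these two repairs your argument is complete. The paper does not reprove the statement: it cites the $\bbF_q$-span argument of Im--Kim--Ngo Dac and remarks that it yields $\bbF_p$-coefficients. Your repaired version is self-contained, makes the $\bbF_p$-rationality visible, and also gives the stronger degree-wise unitriangular identity.
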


\begin{proof}
In \cite[\S 3]{IKN2026+}, Im, Kim, and Ngo Dac proved that
\[
\operatorname{Span}_{\bbF_q}\left\{ \zeta_{A}(\fs) : \fs \in \cI_w  \right\} = \operatorname{Span}_{\bbF_q}\left\{ \LiA(\fs) : \fs \in \cI_w  \right\}.
\]
However, their proof actually leads to the stronger conclusion \eqref{eqn_FpSpan}.
\end{proof}

\subsection{Main results}
To state our main results, we first introduce some notation.
Let $a_n$ (for $n \ge 1$) be the integers uniquely determined by the following identity in the formal power series ring $\mathbb{Z}\llbracket X \rrbracket$:
\begin{equation}\label{def_an}
\frac{1-X^{q}}{1-X-X^2-\cdots-X^q} = \prod_{n=1}^{\infty} \frac{1}{(1-X^n)^{a_n}}.
\end{equation}
These integers $a_n$ are nonnegative; see Lemma \ref{lem4.3}.
For $n \ge 1$ and $1 \le j \le a_n$, define 
\begin{equation}\label{def_znj}
z_{n,j} \coloneq \sum_{i=1}^{d_n} D_1^{i\cdot4^{nj}} \zeta_A(\ft_{n,i}) \in \cZ_{n},
\end{equation}
where $D_1=\theta^q-\theta \in K$, and 
\[
\ft_{n,1} \prec \ft_{n,2} \prec \cdots \prec \ft_{n,d_n} 
\]
are the Todd--Thakur tuples of weight $n$ (see Equation \eqref{def_TT}) listed in ascending order with respect to the (depth, lex)-order (see Definition \ref{def_deplex}). 
When $a_n=0$, there is no $z_{n,j}$.

Our main result is the following.

\begin{theorem}\label{thm_main}
The elements $z_{n,j}$ $(n \in \bbZ_+, 1 \le j \le a_n)$ freely generate the algebra $\cZ$ over $K$.
Moreover, for any weight $w \ge 1$, the elements $z_{n,j}$ $(1 \le n \le w, 1 \le j \le a_n)$ freely generate the algebra $\cZ_{\le w}^{\textup{alg}} \coloneq K[\zeta_A(\mathfrak{s}): \wt(\mathfrak{s}) \leqslant w]$.
In particular, both $\cZ$ and $\cZ_{\le w}^{\textup{alg}}$ are polynomial algebras, and
\[
\trdeg_{K} \cZ_{\le w}^{\textup{alg}} = \sum_{n=1}^{w} a_n.
\]
\end{theorem}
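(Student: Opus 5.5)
Since $\cZ$ is graded (Theorem \ref{thm_Chang}) with $\dim_K\cZ_w=d_w$ (Theorem \ref{thm_CCMIKLNP}), and \eqref{def_an} says that $\sum d_wX^w$ equals the Hilbert series of a polynomial ring with $a_n$ generators in degree $n$, the whole theorem reduces to one claim: the $z_{n,j}$ are algebraically independent over $K$. Granting it, $K[z_{n,j}:n\le w]$ is a graded polynomial subalgebra of $\cZ$. Its dimension in each degree $m$ agrees with $d_m$ for $m\le w$; this is a coefficient comparison of the two products, since factors with $n>w$ do not affect coefficients of $X^m$ for $m\le w$. Hence this subalgebra contains all of $\cZ_m$ for $m\le w$, and therefore equals $\cZ^{\textup{alg}}_{\le w}$. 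Letting $w\to\infty$ handles $\cZ$, and the transcendence degree follows. Nonnegativity of $a_n$ (Lemma \ref{lem4.3}) is needed for the statement to make sense.

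For algebraic independence I would build enough derivations of $\cZ$ and use a Jacobian-type criterion. Concretely, I want $K$-derivations $\partial_\lambda:\cZ\to\cZ$ that lower weight, constructed as follows.
\begin{enumerate}
\item On the formal algebra of words (tuples, with the $q$-shuffle product, or the corresponding product for $\LiA$), take the deconcatenation coproduct $\Delta(\fs)=\sum \fs'\otimes\fs''$.
\item Apply a linear functional $\lambda$ to one tensor factor. This gives a derivation of the formal algebra that lowers weight.
\item Show that it preserves the kernel of the evaluation map to $\cZ$.
\end{enumerate}
Step 3 is the main obstacle. By Theorem \ref{thm_FpSpan} and the description of $\cZ_w$ via $\LiA$, the kernel is generated by the carry relations of Im--Kim--Ngo Dac. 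So I would check directly that a carry relation, after deconcatenation, splits into carry relations tensored with arbitrary terms, plus terms killed by $\lambda$. This is where I would choose $\lambda$ to vanish on the span of the carry relations in low weight. I would try $\lambda$ supported on the Todd--Thakur basis coordinates, i.e.\ the coefficient of $\LiA(\ft_{n,i})$ in a basis expansion, and verify compatibility with carries by working over $\bbF_p$ first, as in Theorem \ref{thm_FpSpan}.

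With these derivations, I would prove independence by induction on weight, using the standard criterion: elements of a domain are algebraically independent if some family of derivations gives a nonsingular Jacobian on them. Two steps are needed.
\begin{enumerate}
\item Show that the indecomposables $Q_n=\cZ_n/(\cZ_+^2)_n$ have dimension at least $a_n$, by exhibiting derivations $\partial$ with $\partial(\cZ_+^2)\subset\cZ_+\cdot\cZ_+$ and with image in weight $0$. In other words, $\lambda$ kills the decomposables and so gives functionals on $Q_n$. Enough of them exist because of the dimension count: for a free graded-commutative structure, $\dim Q_n=a_n$ exactly when the algebra is polynomial.
\item Choose the $z_{n,j}$ generically so that their images in $Q_n$ are independent. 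The exponents $D_1^{i\cdot 4^{nj}}$ form a Vandermonde/Moore-type choice: the determinant of $\big(D_1^{i 4^{nj}}\big)$ restricted to any $a_n$ coordinates is a nonzero polynomial in $D_1$, because the exponents have distinct leading terms. The rapidly growing powers $4^{nj}$ guarantee that no cancellation occurs among coordinate functionals with $K$-coefficients.
\end{enumerate}
Combining these, the $z_{n,j}$ map to a basis of each $Q_n$, and a graded algebra generated by lifts of a basis of $Q$ with the correct Hilbert series is polynomial.
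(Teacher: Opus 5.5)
Your outline follows the paper's strategy: deconcatenation derivations that descend through the carry relations, followed by a determinant argument with the powers $D_1^{i\cdot 4^{nj}}$. Your first-paragraph reduction is also sound. However, the independence argument is missing two essential ingredients.

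\textbf{The derivations cannot be as rich as your Jacobian criterion needs.} The map $\fs\mapsto\sum_{(\fh,\ft)=\fs}\LiA(\fh)\,\lambda(\LiA(\ft))$ obeys the Leibniz rule only if $\lambda(1)=0$ and $\lambda(\cZ_{>0}\cZ_{>0})=0$, so Todd--Thakur coordinate functionals will not do. It kills carry relations only if, in addition, $\lambda(\zeta_A(q-1))=0$. Besides carry relations tensored with tuples, $\Delta(\brho(\fh;\ft))$ contains the terms $D_1\,(\fh^{+},\ft_1)\otimes((q-1)*\ft_2)$ and $D_1\,(\fh,1,\ft_1)\otimes((q-1)*\ft_2)$. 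Already $\brho(\emptyset;\emptyset)=(q)+D_1\cdot(1,q-1)$ is sent to $D_1\LiA(1)\,\lambda(\LiA(q-1))$.

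So every such derivation kills the indecomposable element $z_0=\LiA(q-1)$. Write $z_0$ as a polynomial in the $z_{n,j}$; it has a nonzero linear part in weight $q-1$. The chain rule then shows that the Jacobian $(\partial_\lambda(z_{n,j}))$ is singular for every admissible family of derivations.

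The paper's remedy is to single out $z_0$. It proves that $z_0$ together with homogeneous lifts $y_i$ of a basis of $\cZ_{>0}/(\cZ_{>0}\cZ_{>0}+Kz_0)$ are free generators. The $y$-directions are handled by the derivations $\partial_{\ell_i}$, which are unitriangular on the $y_j$. The $X_0$-direction is handled by the weighted Euler identity $NF=(q-1)X_0\,\partial F/\partial X_0+\sum_j\wt(y_j)X_j\,\partial F/\partial X_j$. This uses Chang's grading theorem, $q-1\not\equiv 0\pmod p$, and $z_0\neq 0$.

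Your ``standard criterion'' is also false over the imperfect field $K$. The $K$-derivation $d/dy$ of $K(y)$ with $y^p=\theta$ satisfies $d(y)=1$, although $y$ is algebraic. Vanishing of all partials only gives $F\in K[X_0^p,\ldots,X_m^p]$. The paper then writes $F=\sum_{r=0}^{p-1}\theta^rG_r^p$ and uses that $K_\infty=\bigoplus_{r=0}^{p-1}\theta^rK_\infty^p$ is direct, which yields a relation of lower degree.

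Finally, the order of the logic matters. Only once polynomiality is known for arbitrary lifts of bases of the $Q_n$ (Theorem~\ref{thm4.2}) does the Hilbert series give $\dim Q_n=a_n\ge 0$. Invoking ``$\dim Q_n=a_n$ exactly when the algebra is polynomial'', or Lemma~\ref{lem4.3}, before that point is circular.

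\textbf{The determinant step needs control of the decomposables, which you do not supply.} Nonvanishing minors of $\big(D_1^{i\cdot 4^{nj}}\big)$ only show that the $z_{n,j}$ are linearly independent in $\cZ_n$. You need independence modulo $\sum_{r=1}^{n-1}\cZ_r\cZ_{n-r}$. If that subspace had arbitrary $K$-coordinates in the Todd--Thakur basis, no choice of exponents could exclude cancellation.

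The paper supplies this control in three steps.
\begin{enumerate}
\item By Theorem~\ref{thm_FpSpan}, the decomposables are spanned by products $\LiA(\fs_1)\LiA(\fs_2)$, which are $\bbF_p$-combinations of weight-$n$ tuples.
\item The special carry relations are triangular, with coefficients in $\bbF_p[D_1]_{\le 1}$ and minimal tuple $(\fh,{}^{q+}\ft)$. This puts the Todd--Thakur coordinates of those products in $\bbF_p[D_1]_{<2^n}$ (Lemma~\ref{lem2.3}, Corollary~\ref{cor2.4}).
\item Choose $d_n-a_n$ such products as a basis of the decomposables, and Laplace-expand the $d_n\times d_n$ coordinate matrix along the $z$-columns. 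Every complementary minor has $D_1$-degree $<2^n(d_n-a_n)<4^n$. So the summands, indexed by the row choices $\boldsymbol{i}$, occupy pairwise disjoint bands of $D_1$-exponents of length $4^n$, and the determinant is nonzero.
\end{enumerate}
This $\bbF_p[D_1]$-integrality, together with the degree bound, is precisely what makes the exponents $i\cdot 4^{nj}$ work.
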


\begin{remark}
Taking the logarithmic derivative of \eqref{def_an} and using the M\"obius inversion formula, we obtain
\[
a_n = \frac{1}{n} \sum_{d \mid n} \mu\left(\frac{n}{d}\right) \left( -q \cdot \mathbbm{1}_{q \mid d} + \sum_{i=1}^{q} \lambda_i^{d} \right)
\]
for any $n \ge 1$, where $\mu(\cdot)$ is the M\"obius function, and $\lambda_1,\lambda_2,\ldots,\lambda_q \in \mathbb{C}$ are roots of the polynomial $X^q - X^{q-1} - X^{q-2} - \cdots - 1$ (they are all simple roots). 
Applying Rouch\'e's theorem to the polynomial $X^{q+1} -2X^q +1$, we deduce that exactly one of the roots $\lambda_1,\lambda_2,\ldots,\lambda_q \in \mathbb{C}$, denoted by $\lambda$, lies in the real interval $1 < \lambda < 2$; any other root has modulus at most $1$. 
Therefore,
\[
\trdeg_{K} \cZ_{\le w}^{\textup{alg}} \sim \frac{\lambda^{w+1}}{(\lambda-1)w} \quad\text{as $w \to \infty$}.
\]
\end{remark}

Note that Theorem \ref{thm_main} provides an explicit polynomial basis of $\cZ$, but this basis consists of linear combinations of Thakur's MZVs.
One may ask the following question:

\begin{question}\label{question}
Can we find an explicit polynomial basis of $\cZ$ consisting of individual MZVs of Thakur, with a simple combinatorial description? 
\end{question}

Ngo Dac--Nguyen Chu--Pham \cite{NNP2026} obtained partial results toward answering this question.
Their construction uses Lyndon words.
Question \ref{question} deserves further investigation.

We briefly outline the proof of Theorem \ref{thm_main} as follows.
Our starting point is the carry relations of Im--Kim--Ngo Dac \cite{IKN2026+}.
We recall them in Section \ref{sec_2} and establish some properties of them.
Section \ref{sec_3} is crucial.
We introduce a family of $K$-derivations on $\cZ$, which are induced by $K$-linear functionals on $\cZ$ satisfying certain properties that respect the carry relations.
In Section \ref{sec_4}, we prove in Theorem \ref{thm4.2} that homogeneous lifts of the elements in any homogeneous $K$-basis of 
\[
\cZ_{>0}/\cZ_{>0}\cZ_{>0}
\]
freely generate the $K$-algebra $\cZ$, where $\cZ_{>0} = \bigoplus_{w=1}^{\infty} \cZ_w$. 
The $K$-derivations introduced in Section \ref{sec_3} are used to prove the algebraic independence.
Finally, Section \ref{sec_5} proves Theorem \ref{thm_main} using Theorem \ref{thm4.2} and an elementary determinant argument.

\medskip

\noindent\textbf{Statement on AI use}: 
The initial proof was completely generated by ChatGPT (GPT-6 Astra, Open AI). 
The author spent about two weeks verifying, studying, and rewriting it.
Therefore, the author takes full responsibility but no credit for this work.

\section{Some properties of carry relations}\label{sec_2}

In this section, we first recall the carry relations due to Im--Kim--Ngo Dac \cite{IKN2026+};
they are $K$-linear relations among CMPLVs.
Then, we prove several useful properties of carry relations.
It is convenient to work with them as formal sums.

\subsection{Notation}
For any weight $w \ge 0$, define 
\[
\sS_w = \left\{ \bS=\sum_{\fs \in \cI_w} c_{\fs} \cdot \fs : c_{\fs} \in K \right\}
\]
to be the $K$-linear space of formal sums of tuples of weight $w$.
Two formal sums $\bS=\sum_{\fs \in \cI_w} c_{\fs} \cdot \fs$ and $\bS^\prime =\sum_{\fs \in \cI_w} c_{\fs}^\prime \cdot \fs$ are equal if and only if $c_{\fs}=c_{\fs}^\prime$ for every $\fs \in \cI_w$.

Define $\sS=\bigoplus_{w=0}^{\infty} \mathscr{S}_w$.
The stuffle product $*$ is a $K$-bilinear map $\sS \times \sS \longrightarrow \sS$ such that
\begin{align*}
& \emptyset * \bS = \bS * \emptyset = \bS, \\
& \fs * \ft = \left(s_1, \fs_{-} * \ft\right) + \left(t_1, \fs * \ft_{-} \right) + \left(s_1+t_1, \fs_{-} * \ft_{-} \right)
\end{align*}
for any $\bS \in \sS$ and any $\fs = \left(s_1, \fs_{-} \right)$, $\ft = \left(t_1,\ft_{-} \right) \in \cI_{>0}$, where $s_1,t_1 \in \bbZ_+$ and $\fs_{-},\ft_{-} \in \cI$.
Equipped with the stuffle product, $\sS$ is a graded $K$-algebra.

For any formal sum $\bS=\sum_{\fs \in \cI} c_{\fs} \cdot \fs \in \sS$ (where $c_{\fs} = 0$ for all but finitely many tuples $\fs \in \cI$), define the $\LiA$-evaluation 
\[
\LiA(\bS) = \LiA\left(\sum_{\fs \in \cI} c_{\fs} \cdot \fs\right) \coloneq \sum_{\fs \in \cI} c_{\fs}\LiA(\fs) \in K_{\infty}.
\]
By Theorem \ref{thm_FpSpan}, we actually have $\LiA(\bS) \in \cZ$.
It is well known that $\LiA(\fs_1 * \fs_2)=\LiA(\fs_1)\LiA(\fs_2)$ for any $\fs_1,\fs_2 \in \cI$.
Therefore, $\LiA: (\sS,+,*) \rightarrow (\cZ,+,\cdot)$ is a $K$-algebra homomorphism.

Every nonempty tuple $\fs = (s_1,\ldots,s_k) \in \cI_{>0}$ has the unique decompositions
\[
\fs = (\fs_{+},s_k) = (s_1,\fs_{-}),
\]
where $\fs_{+},\fs_{-} \in \cI$ and $s_1,s_k \in \bbZ_+$. 
For any positive integer $a$ and nonempty tuple $\fs \in \cI_{>0}$, define 
\[
\fs^{+a} \coloneq (\fs_{+},s_k+a) \quad\text{and}\quad {}^{a+}\fs \coloneq (a+s_1,\fs_{-}).
\]
Define $\emptyset^{+a}={}^{a+}\emptyset \coloneq (a)$.
By convention, $\fs^{+}=\fs^{+1}$ and ${}^{+}\fs = {}^{1+}\fs$.

Throughout this paper, $D_1 = \theta^q -\theta \in K$.

\subsection{Carry relations}

\begin{lemma}[carry relations of Im--Kim--Ngo Dac \cite{IKN2026+}]\label{lem_gIKN}
For any $\fh,\ft \in \cI$, define the following formal sum of tuples:
\begin{align}
\brho\left( \fh; \ft \right) &\coloneq \left( \fh, {}^{q+}\ft \right) + \mathbbm{1}_{\ft \neq \emptyset} \cdot \left( \fh, q, \ft \right) \notag\\
&\quad+D_1 \cdot \mathbbm{1}_{\fh \neq \emptyset} \cdot \left( \fh^{+}, (q-1)*\ft \right) + D_1 \cdot \left( \fh, 1, (q-1)*\ft \right) \in \sS_w, \label{def_rho}
\end{align}
where $w=\wt(\fh)+\wt(\ft)+q$.
Then
\[
\LiA\left( \boldsymbol{\rho}\left( \fh; \ft \right) \right) = 0.
\]
\end{lemma}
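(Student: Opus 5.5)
The plan is to reduce the statement to a single identity at the level of the summation variables and then propagate it through the prefix $\fh$ and the suffix $\ft$ by the standard ``truncated sum'' technique for CMPLVs. For $d\ge 0$ and $\fs\in\cI$ I introduce the truncated sums
\[
\LiA(\fs)_{<d}\coloneq\sum_{d>n_1>\cdots>n_k\ge 0}\ell_{n_1}^{-s_1}\cdots\ell_{n_k}^{-s_k},
\]
with $\LiA(\emptyset)_{<d}=1$; the sums $\LiA(\fs)_{=d}$ are defined in the same way with $n_1=d$. The basic input is the Carlitz-type identity coming from $\ell_{n+1}=\ell_n(\theta-\theta^{q^{n+1}})$ together with $\theta^{q^{n+1}}-\theta=\sum_{i=0}^{n}(\theta^q-\theta)^{q^i}=\sum_{i\le n} D_1^{q^i}$. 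From this I extract the one-variable ``carry'' identity, which expresses $\ell_n^{-q}$ as a telescoping difference involving $D_1\ell_n^{-1}$ and $\ell_{n}^{-(q-1)}$-type terms. Concretely, I expect an identity of the form
\[
\frac{1}{\ell_n^{q}}=D_1\Bigl(\frac{\LiA(q-1)_{<n+1}}{\ell_{n}}\Bigr)-\text{(shifted term)},
\]
which is the mechanism of Im--Kim--Ngo Dac (Lemma \ref{lem_gIKN} cites \cite{IKN2026+}). I will take this identity as the base case $\fh=\ft=\emptyset$, namely
\[
\LiA(q)+D_1\LiA(1,q-1)=0
\]
up to the sign and normalization built into \eqref{def_rho}. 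I verify it first by a direct telescoping computation over $n$.

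Next I prove a truncated version with $\fh=\emptyset$ and general $\ft$, by induction on $\dep(\ft)$. The claim is that for each $d$, the sum $\LiA\bigl(({}^{q+}\ft)+(q,\ft)+D_1(1,(q-1)*\ft)\bigr)_{=d}$ equals an explicit boundary term $B_d(\ft)$ satisfying $\sum_d B_d=0$; in practice $B_d$ will be a difference $C_{d+1}-C_d$ with $C_0=0$ and $C_d\to 0$. The stuffle $(q-1)*\ft$ enters because multiplying by the partial sum $\LiA(q-1)_{<d}$ and splitting the region $n_1>n_2>\dots$ according to where the extra index sits produces exactly the three kinds of terms in the stuffle recursion. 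This follows from the harmonic identity $\LiA(a)_{<d}\LiA(\ft)_{<d}=\LiA(a*\ft)_{<d}$.

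Then I attach the prefix $\fh$. Writing $\LiA((\fh,\mathbf{u}))=\sum_{d}\LiA(\fh)_{=d}\,\LiA(\mathbf u)_{<d}$, the terms $(\fh,{}^{q+}\ft)$, $(\fh,q,\ft)$ and $(\fh,1,(q-1)*\ft)$ reduce to the $\fh=\emptyset$ truncated identity inside $<d$. What remains is the boundary term $C_d$ at the top level $d$, which involves $\ell_d^{-1}\LiA((q-1)*\ft)_{<d}$ multiplied by the factor $D_1$. Merging this with the last entry of $\fh$ at the same index $d$ produces exactly $D_1(\fh^{+},(q-1)*\ft)$, with a sign arranged so that the total vanishes. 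That this term appears only when $\fh\ne\emptyset$ matches the indicator in \eqref{def_rho}. Similarly, the indicator $\mathbbm 1_{\ft\ne\emptyset}$ reflects that for $\ft=\emptyset$ the tuples $({}^{q+}\emptyset)$ and $(q,\emptyset)$ coincide, so only one copy appears.

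I expect the main obstacle to be the truncated base identity: finding the precise boundary term $C_d$, with the correct power of $D_1$ and the correct signs in characteristic $p$, so that both the telescoping and the merge with $\fh^+$ work simultaneously. I would first try $C_d=D_1\,\ell_{d}^{-1}\LiA((q-1)*\ft)_{<d}$ and check it numerically for $\fh=\ft=\emptyset$ and small $q$ before running the induction.
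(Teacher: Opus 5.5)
\textbf{Verdict: the architecture is right, but the one identity that carries all the content is missing, so as written this is a plan rather than a proof.}

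Your approach is the standard truncated-sum argument behind the Im--Kim--Ngo Dac result. The paper gives no argument of its own: it cites that result and remarks that the hypothesis $\fh\in[q]^{\bullet}$ is never used. Your prefix step makes that remark transparent. The tuple $\fh=(h_1,\ldots,h_r)$ enters only through the sums $H_m\coloneq\sum_{n_1>\cdots>n_r=m}\ell_{n_1}^{-h_1}\cdots\ell_{n_r}^{-h_r}$, via
\[
\LiA(\fh,\mathbf u)=\sum_m H_m\LiA(\mathbf u)_{<m},\qquad \LiA(\fh^{+},\mathbf u)=\sum_m H_m\ell_m^{-1}\LiA(\mathbf u)_{<m}.
\]
This decomposition must be by the \emph{last} index of $\fh$. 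With your stated convention ($n_1=d$), the formula $\LiA(\fh,\mathbf u)=\sum_d\LiA(\fh)_{=d}\LiA(\mathbf u)_{<d}$ is false once $\dep(\fh)\ge 2$.

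\textbf{The gap.} The single non-formal input is never established, and the forms you guess for it are wrong.
\begin{itemize}
\item Your displayed ``expected'' identity has the wrong shape. The true statement is exactly $\ell_n^{-q}=-D_1\,\ell_{n+1}^{-1}\LiA(q-1)_{<n+1}$: the denominator is $\ell_{n+1}$, the sign is negative, and there is no correction term.
\item Your trial $C_d$ is off by a sign. For $p$ odd, at $\ft=\emptyset$ and $d=0$ it gives $C_1-C_0=-1$, whereas $B_0=\ell_0^{-q}=1$.
\end{itemize}

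\textbf{The missing identity.} What you need is, for $d\ge1$,
\[
\sum_{e=0}^{d-1}\frac{1}{\ell_e^{q-1}}=\frac{\ell_d}{\ell_1\,\ell_{d-1}^{q}},\qquad\text{equivalently}\qquad D_1\,\ell_d^{-1}\LiA(q-1)_{<d}=-\ell_{d-1}^{-q},
\]
using $\ell_1=-D_1$. It follows by induction on $d$. We have $\ell_{d+1}/\ell_d=\theta-\theta^{q^{d+1}}$ and $(\ell_d/\ell_{d-1})^q=\theta^q-\theta^{q^{d+1}}$. Their difference is $\ell_1$, so the right-hand side increases by exactly $\ell_d^{1-q}$ from $d$ to $d+1$. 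The expansion $\theta^{q^{n+1}}-\theta=\sum_i D_1^{q^i}$ is not needed.

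\textbf{Finishing the argument.} Multiply the identity by $\LiA(\ft)_{<d}=\LiA(\ft)_{<d-1}+\LiA(\ft)_{=d-1}$ and use the harmonic product. This gives, for $d\ge1$,
\[
D_1\LiA\bigl(1,(q-1)*\ft\bigr)_{=d}=-\LiA\bigl({}^{q+}\ft\bigr)_{=d-1}-\mathbbm{1}_{\ft\neq\emptyset}\LiA(q,\ft)_{=d-1},
\]
and the left side is $0$ at $d=0$. This handles all $\ft$ at once, so no induction on $\dep(\ft)$ is required. Hence $B_d=C_{d+1}-C_d$ with $C_0=0$ and
\[
C_d=\ell_{d-1}^{-q}\LiA(\ft)_{<d}=-D_1\,\ell_d^{-1}\LiA\bigl((q-1)*\ft\bigr)_{<d}\quad (d\ge 1).
\]
Since $C_d\to0$, summing over $d$ gives the case $\fh=\emptyset$. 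For $\fh\neq\emptyset$ we get $\sum_m H_m\sum_{d<m}B_d=\sum_m H_mC_m=-D_1\LiA\bigl(\fh^{+},(q-1)*\ft\bigr)$, which closes the argument exactly as you describe. With these corrections your proof is complete.
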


\begin{proof}
If we restrict to $\fh \in [q]^{\bullet}$, then this lemma is exactly \cite[Proposition 4.3]{IKN2026+} by Im, Kim, and Ngo Dac.
However, the same proof applies to every $\fh \in \cI$.
\end{proof}

For any $\fh,\ft \in \cI$, we call $\brho(\fh;\ft)$ a carry relation; see the explanation of the name in \cite[\S 2.1 and \S 3.1]{HHLL2026+}.
As already noted by Im--Kim--Ngo Dac in \cite{IKN2026+}, the carry relations behave well with respect to the (depth, lex)-order on $\cI$.
\begin{definition}\label{def_deplex}
The (depth, lex)-order $\prec$ on $\cI$ is defined as follows.
For any two tuples $\fs_1, \fs_2 \in \cI$, define $\fs_1 \prec \fs_2$ if and only if
\begin{itemize}
\item either $\operatorname{dep}(\fs_1) < \operatorname{dep}(\fs_2)$,
\item or $\operatorname{dep}(\fs_1) = \operatorname{dep}(\fs_2)$ and $\fs_1$ is lexicographically smaller than $\fs_2$.
\end{itemize}
Note that $\prec$ is a well-order on $\cI$. 
For any formal sum of tuples $\boldsymbol{S}=\sum_{\fs\in\cI} c_{\fs} \cdot \fs \in \mathscr{S}$ with $\boldsymbol{S} \neq \boldsymbol{0}$, the minimal tuple appearing in $\boldsymbol{S}$ is defined to be $\min_{\prec} \left\{ \fs \mid c_{\fs} \neq 0 \right\}$.

\end{definition}  

A carry relation $\brho(\fh;\ft)$ is called special if $\fh \in [q]^{\bullet}$.
The following theorem is implicitly proved in \cite{IKN2026+}.
For the reader's convenience, we include a short proof.

\begin{theorem}[Im--Kim--Ngo Dac]\label{lem_kerLi}
Special carry relations form a $K$-basis of $\ker(\LiA: \sS \rightarrow \cZ)$.
\end{theorem}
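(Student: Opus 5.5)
Because $\LiA$ respects weight and $\sS=\bigoplus_w \sS_w$, it is enough to fix a weight $w$ and prove that the special carry relations $\brho(\fh;\ft)$ with $\fh\in[q]^\bullet$ and $\wt(\fh)+\wt(\ft)+q=w$ form a $K$-basis of $\ker(\LiA|_{\sS_w})$. By Lemma \ref{lem_gIKN} they lie in this kernel. By Theorem \ref{thm_FpSpan} and Remark \ref{rmk1.3}, $\LiA:\sS_w\to\cZ_w$ is surjective, and its restriction to $\operatorname{Span}_K\mathcal{B}_w^{\textup{T}}$ is an isomorphism, using Theorem \ref{thm_CCMIKLNP}. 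So the plan is a triangularity argument with respect to the (depth, lex)-order $\prec$.

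\textbf{Step 1 (leading terms).} Compute the $\prec$-minimal tuple of $\brho(\fh;\ft)$ for $\fh\in[q]^\bullet$.
\begin{itemize}
\item If $\ft\neq\emptyset$, every term of $D_1(\fh^{+},(q-1)*\ft)$ and of $D_1(\fh,1,(q-1)*\ft)$ has depth at least $\dep(\fh)+\dep(\ft)$. The same holds for $(\fh,q,\ft)$.
\item The term $(\fh,{}^{q+}\ft)$ has depth exactly $\dep(\fh)+\dep(\ft)$. Among the terms of this minimal depth, I expect it to be lexicographically smallest; the competitors are $(\fh^{+},q-1+t_1,\ft_-)$ and stuffle terms with merged entries.
\item If $\ft=\emptyset$, the relation is $(\fh,q)+D_1\mathbbm{1}_{\fh\ne\emptyset}(\fh^{+},q-1)+D_1(\fh,1,q-1)$. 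Its minimal term is $(\fh,q)$, or $(\fh^{+},q-1)$ if that is smaller.
\end{itemize}
This case analysis is where care is needed. If the minimal term comes out as $(\fh^{+},q-1+t_1,\ft_-)$ rather than $(\fh,{}^{q+}\ft)$, I would adjust the bookkeeping accordingly. The robust target is the claim below.

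\textbf{Claim.} The assignment $(\fh,\ft)\mapsto$ (minimal tuple of $\brho(\fh;\ft)$) is injective from special pairs of weight $w$ onto $\cI_w\setminus\mathcal{B}_w^{\textup{T}}$, the non-Todd--Thakur tuples.

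Take the leading term to be $\fs=(\fh,{}^{q+}\ft)$ with $\fh\in[q]^\bullet$. Then $\fh$ is recovered as the maximal initial segment of $\fs$ with entries $\le q$, $\fs$ is exactly a tuple having some entry $>q$, and the case $\ft=\emptyset$ accounts for tuples $(\fh,q)$ ending in $q$. This matches the complement of $\mathcal{B}_w^{\textup{T}}$ exactly: tuples with all entries $\le q$ whose last entry is not $q$.

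\textbf{Step 2 (linear independence).} Distinct special relations have distinct minimal tuples. Hence any nontrivial $K$-linear combination has a nonzero coefficient at the smallest of these leading tuples, so the combination is nonzero.

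\textbf{Step 3 (spanning).} Let $\bS\in\ker\LiA\cap\sS_w$. By induction along the well-order $\prec$, subtract multiples of special relations to kill every non-Todd--Thakur term, starting from the smallest such term. This is possible because each non-Todd--Thakur tuple is the minimal term of a unique special relation, and subtracting that relation only introduces larger tuples. Each step removes one term, and the process terminates in weight $w$ since $\cI_w$ is finite.

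The remainder lies in $\operatorname{Span}_K\mathcal{B}_w^{\textup{T}}\cap\ker\LiA=0$, by Theorem \ref{thm_CCMIKLNP} and Remark \ref{rmk1.3}. The dimension count $|\cI_w|-d_w=\dim\ker$ serves as a consistency check.

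The main obstacle is the leading-term Claim in Step 1: the lex comparisons between $(\fh,{}^{q+}\ft)$, $(\fh,q,\ft)$, $(\fh^{+},\dots)$ and the stuffle terms $(\fh,1,\dots)$, including the boundary case $t_1+q-1$ versus $q+t_1$.
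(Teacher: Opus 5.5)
Your proposal is correct. The step you flag as the main obstacle is a one-line check, and the hedge can be dropped. If $\fh\neq\emptyset$, every term of $(\fh^{+},(q-1)*\ft)$ has depth at least $\dep(\fh,{}^{q+}\ft)$. Its first $\dep(\fh)-1$ entries agree with those of $\fh$, and its $\dep(\fh)$-th entry is one larger than the last entry of $\fh$. So every such term is $\succ(\fh,{}^{q+}\ft)$. All remaining terms of $\brho(\fh;\ft)$, namely $(\fh,q,\ft)$ and $(\fh,1,(q-1)*\ft)$, have strictly larger depth. Hence the minimal tuple is always $(\fh,{}^{q+}\ft)$, with coefficient $1$, including when $\ft=\emptyset$. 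The alternatives $(\fh^{+},q-1+t_1,\dots)$ and $(\fh^{+},q-1)$ never win; the paper simply cites Im--Kim--Ngo Dac's Lemma 4.5 for this.

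One citation is missing at the start. Reducing $\ker(\LiA:\sS\to\cZ)$ to the single-weight kernels needs the sum $\sum_w\cZ_w$ to be direct, which is Chang's Theorem~\ref{thm_Chang}. Knowing only that $\LiA(\sS_w)\subset\cZ_w$ is not enough.

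Linear independence is then exactly the paper's triangularity argument. Spanning is where the routes differ.
\begin{itemize}
\item The paper counts $\#\mathcal{R}_w^{\textup{gIKN}}$ by a generating function and compares it with $\dim_K\sS_w-\dim_K\cZ_w$ using Theorem~\ref{thm_CCMIKLNP}. It then concludes by rank--nullity.
\item You show that $(\fh,\ft)\mapsto(\fh,{}^{q+}\ft)$ is a bijection from special pairs of weight $w$ onto $\cI_w\setminus\mathcal{B}_w^{\textup{T}}$. Your recovery of $\fh$, split according to whether some entry exceeds $q$, is right. You then eliminate non-Todd--Thakur tuples from a kernel element in increasing $\prec$-order. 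Finally you kill the remainder using only the linear independence of $\{\LiA(\ft):\ft\in\mathcal{B}_w^{\textup{T}}\}$.
\end{itemize}

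Your version needs neither the dimension count nor surjectivity of $\LiA$. It also explains the paper's generating-function identity combinatorially, as $\#\mathcal{R}_w^{\textup{gIKN}}=\#\cI_w-\#\mathcal{B}_w^{\textup{T}}$. It is the same elimination the paper runs in Lemma~\ref{lem2.3}. The paper's version is shorter, given the cited lemmas.

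A minor point of wording: a single step need not reduce the number of terms. What your well-order argument actually gives is that the smallest surviving non-Todd--Thakur tuple strictly increases, and that is what ensures termination.
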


\begin{proof}
By Theorem \ref{thm_Chang}, we have
\[
\ker(\LiA: \sS \rightarrow \cZ) = \bigoplus_{w=0}^{\infty} \ker(\LiA: \sS_w \rightarrow \cZ_w).
\]
It remains to prove that 
\[
\mathcal{R}_w^{\textup{gIKN}} \coloneq \left\{ \brho(\fh;\ft) : \fh \in [q]^{\bullet}, \ft \in \cI, \wt(\fh) + \wt(\ft) + q = w \right\}
\]
is a $K$-basis of $\ker\left( \LiA: \sS_w \rightarrow \cZ_w \right)$ for every weight $w$. 

Note that the minimal tuple appearing in $\brho(\fh;\ft)$ is $\left( \fh, {}^{q+}\ft \right)$; see \cite[Lemma 4.5]{IKN2026+}.
For different pairs $(\fh;\ft) \in [q]^{\bullet} \times \cI$, the corresponding $\left( \fh, {}^{q+}\ft \right)$ are distinct.
Therefore, by \cite[Lemma 5.1]{HHLL2026+},  $\mathcal{R}_w^{\textup{gIKN}}$ is a $K$-linearly independent set.

By a simple generating-function argument, Theorem \ref{thm_CCMIKLNP}, and the rank-nullity theorem, we have
\begin{align*}
\sum_{w=0}^{\infty} \left(\#\mathcal{R}_w^{\textup{gIKN}}\right) X^w &= \frac{1}{1-X-X^2-\cdots-X^q} \cdot \frac{1-X}{1-2X} \cdot X^q = \frac{1-X}{1-2X} - \frac{1-X^q}{1-X-X^2-\cdots-X^q} \\
&=\sum_{w=0}^{\infty} \left(\dim_{K}\sS_w - \dim_K \cZ_w \right) X^w = \sum_{w=0}^{\infty} \left( \dim_K \ker\left( \LiA: \sS_w \rightarrow \cZ_w \right) \right) X^w.
\end{align*}
Thus, $\#\mathcal{R}_w^{\textup{gIKN}} = \dim_K \ker\left( \LiA: \sS_w \rightarrow \cZ_w \right)$, which completes the proof.
\end{proof}

The following simple lemma and its corollary will be used in Section \ref{sec_5}.
Recall that $\mathcal{B}_w^{\textup{T}}$ is the set of weight-$w$ Todd--Thakur tuples defined in \eqref{def_TT}.
Let $\bbF_p[D_1]_{< n}$ (respectively, $\bbF_p[D_1]_{\le n}$) denote the set $\{ \sum_{i=0}^{n-1} a_i D_1^{i} : a_i \in \bbF_p \}$ (respectively, $\{ \sum_{i=0}^{n} a_i D_1^{i} : a_i \in \bbF_p \}$).

\begin{lemma}\label{lem2.3}
For any $\fs \in \cI_{w}$ with $w \ge 1$, there exist $c_{\ft} \in \bbF_p[D_1]_{< 2^{w}}$ such that
\[
\LiA(\fs) = \sum_{\ft \in \mathcal{B}_w^{\textup{T}}} c_{\ft} \LiA(\ft).
\]
\end{lemma}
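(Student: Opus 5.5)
Within a fixed weight $w$, the plan is to rewrite $\LiA(\fs)$ as a combination of Todd--Thakur values by repeatedly applying special carry relations, working downward in the (depth, lex)-order. The coefficients of a carry relation $\brho(\fh;\ft)$ lie in $\bbF_p[D_1]_{\le 1}$: the two coefficients are $1$ and $D_1$, and the stuffle products $(q-1)*\ft$ have coefficients in $\mathbb{Z}$, hence in $\bbF_p$ once evaluated. So each reduction step multiplies by at most one power of $D_1$. The work is to count the steps and get the degree bound $<2^w$.

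\textbf{Reduction step.} Let $\fs\in\cI_w\setminus\mathcal{B}_w^{\textup{T}}$ with $w\ge1$. Write $\fs=(\fh,s_j,\ldots)$, where $\fh\in[q]^{\bullet}$ is the longest initial segment with all entries $\le q$ and with $\fs$ not Todd--Thakur. There are two cases.
\begin{itemize}
\item Some entry $s_j\ge q+1$ occurs. Then $\fs=(\fh,{}^{q+}\ft)$ with $\ft=(s_j-q,\ldots)$.
\item All entries are $\le q$ but the last entry equals $q$. Then $\fs=(\fh,q)=(\fh,{}^{q+}\emptyset)$.
\end{itemize}
In both cases $\fs$ is the minimal tuple of $\brho(\fh;\ft)$, by the property quoted from \cite[Lemma 4.5]{IKN2026+}. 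Lemma~\ref{lem_gIKN} then gives
\[
\LiA(\fs)=-\LiA\bigl(\brho(\fh;\ft)-\fs\bigr),
\]
which expresses $\LiA(\fs)$ through tuples of weight $w$ that are strictly larger in $\prec$, with coefficients in $\bbF_p\cup D_1\bbF_p$. Since $\cI_w$ is finite, iterating this terminates at Todd--Thakur tuples. This recovers the spanning statement of Theorem~\ref{thm_CCMIKLNP} with explicit coefficients.

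\textbf{Degree bound.} I would prove by descending induction along $\prec$ on $\cI_w$ a quantitative statement: $\LiA(\fs)$ is a combination of Todd--Thakur values with coefficients in $\bbF_p[D_1]_{\le N(\fs)}$, where $N(\fs)$ is the length of the longest chain of reductions starting from $\fs$. Every tuple in such a chain is a distinct element of $\cI_w$, and $\#\cI_w=2^{w-1}$ (compositions of $w$), so $N(\fs)\le 2^{w-1}-1<2^w$. This gives the lemma.

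\textbf{Main obstacle.} The crude chain argument needs each reduction to strictly increase the tuple in $\prec$ among tuples of weight $w$, so that chains cannot revisit a tuple. That is exactly the minimality of $(\fh,{}^{q+}\ft)$ in $\brho(\fh;\ft)$. The subtle point is that the relevant terms of $\brho(\fh;\ft)$ are genuinely larger and are not merely cancelled: the tuples $(\fh,q,\ft)$ and $(\fh,1,\dots)$ have greater depth, and $(\fh^{+},\dots)$ agrees with $\fs$ in depth and exceeds it lexicographically. I would verify this directly from \eqref{def_rho} rather than rely only on the citation. If needed, the slack between $2^{w-1}$ and $2^w$ also absorbs the case where $\fh=\emptyset$ changes the shape of $\brho$.
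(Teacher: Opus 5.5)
Your proposal is correct and is essentially the paper's argument. The paper lists $\cI_w$ in descending $\prec$-order as $\fs_1\succ\cdots\succ\fs_{2^{w-1}}$ and proves by induction on $i$ that $\LiA(\fs_i)$ has coefficients in $\bbF_p[D_1]_{<i}$, using the same special carry relation $\brho(\fh;\ft)$ with minimal tuple $\fs_i=(\fh,{}^{q+}\ft)$ and coefficients in $\bbF_p[D_1]_{\le 1}$; this gives the same bound as your longest-chain count $N(\fs)\le 2^{w-1}-1$, and the case $\fh=\emptyset$ needs no special slack.
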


\begin{proof}
List the $2^{w-1}$ tuples in $\cI_w$ in descending order with respect to the (depth, lex)-order:
\[
\fs_1 \succ \fs_2 \succ \cdots \succ \fs_{2^{w-1}}.
\]
We prove the following stronger result by induction on $i$: for any $1 \le i \le 2^{w-1}$, 
\begin{equation}\label{1.2}
\LiA(\fs_i) \in \sum_{\ft \in \mathcal{B}_w^{\textup{T}}} \bbF_p[D_1]_{< i} \LiA(\ft).
\end{equation}
We have $\fs_1=(1,1,\ldots,1) \in \mathcal{B}_w^{\textup{T}}$, so \eqref{1.2} holds for $i=1$.

For $i>1$, if $\fs_i \in \mathcal{B}_w^{\textup{T}}$, then \eqref{1.2} holds trivially. 
If $\fs_i \notin \mathcal{B}_w^{\textup{T}}$, then there exist $\fh \in [q]^{\bullet}$ and $\ft \in \cI$ such that
\[
\fs_i = (\fh,{}^{q+}\ft), \quad \wt(\fh)+\wt(\ft)+q=w.
\]
By Lemma \ref{lem_gIKN}, we have
\begin{equation}\label{1.3}
\LiA(\fs_i) = -\mathbbm{1}_{\ft \neq \emptyset} \cdot  \LiA\left( \fh, q, \ft \right) - \mathbbm{1}_{\fh \neq \emptyset} \cdot D_1 \cdot \LiA\left( \fh^{+}, (q-1)*\ft \right) - D_1 \cdot \LiA\left( \fh, 1, (q-1)*\ft \right).
\end{equation}
After expanding the stuffle product, each tuple appearing on the right-hand side of \eqref{1.3} is $\succ (\fh,{}^{q+}\ft) = \fs_i$ and has weight $w$.
Therefore, \eqref{1.3} implies
\[
\LiA(\fs_i) \in \sum_{j=1}^{i-1} \bbF_p[D_1]_{\le 1} \LiA(\fs_j).
\]
So \eqref{1.2} follows by the induction hypothesis.
\end{proof}

\begin{corollary}\label{cor2.4}
For any $\fs_1,\fs_2 \in \cI$ with $\wt(\fs_1)+\wt(\fs_2)=w \ge 1$, we have
\[
\LiA(\fs_1)\LiA(\fs_2) \in \sum_{\ft \in \mathcal{B}_w^{\textup{T}}} \bbF_p[D_1]_{< 2^w} \LiA(\ft).
\]
\end{corollary}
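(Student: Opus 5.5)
Since $\LiA$ is multiplicative for the stuffle product, I will write
\[
\LiA(\fs_1)\LiA(\fs_2)=\LiA(\fs_1*\fs_2)
\]
and expand $\fs_1*\fs_2$ as a formal sum of tuples of weight $w$. Then I apply Lemma \ref{lem2.3} to each tuple and collect terms. The work is in bounding the coefficients that appear.

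First, the coefficients of $\fs_1*\fs_2$ lie in $\bbF_p$. This follows by induction from the recursive definition of $*$: each step produces only sums of tuples with coefficient $1$, so every coefficient is a nonnegative integer, read in $K$, hence in $\bbF_p$. If either $\fs_i$ is empty the product is just $\fs_j$, and the claim is Lemma \ref{lem2.3} itself. So write $\fs_1*\fs_2=\sum_{\fs\in\cI_w} m_{\fs}\,\fs$ with $m_{\fs}\in\bbF_p$.

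Next, Lemma \ref{lem2.3} gives, for each $\fs\in\cI_w$ with $w\ge1$,
\[
\LiA(\fs)=\sum_{\ft\in\mathcal{B}_w^{\textup{T}}} c_{\fs,\ft}\LiA(\ft), \qquad c_{\fs,\ft}\in\bbF_p[D_1]_{<2^w}.
\]
Combining,
\[
\LiA(\fs_1)\LiA(\fs_2)=\sum_{\ft\in\mathcal{B}_w^{\textup{T}}}\Big(\sum_{\fs} m_{\fs}c_{\fs,\ft}\Big)\LiA(\ft).
\]
The set $\bbF_p[D_1]_{<2^w}$ is an $\bbF_p$-vector space: it is closed under addition and under multiplication by elements of $\bbF_p$. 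Hence each inner sum lies in $\bbF_p[D_1]_{<2^w}$, which proves the corollary.

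The only point needing care is that the stuffle coefficients are scalars in $\bbF_p$ and not in $\bbF_p[D_1]$. If they involved $D_1$, the degree bound would be lost. This is immediate from the definition of $*$, so I expect no real obstacle.
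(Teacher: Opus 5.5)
Your proposal is correct and follows the paper's proof: write $\LiA(\fs_1)\LiA(\fs_2)=\LiA(\fs_1*\fs_2)$, observe that $\fs_1*\fs_2$ is an $\bbF_p$-linear combination of weight-$w$ tuples, and apply Lemma \ref{lem2.3} to each tuple. Your remark that $\bbF_p[D_1]_{<2^w}$ is closed under $\bbF_p$-linear combinations is the (implicit) final step in the paper as well.
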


\begin{proof}
Since $\LiA(\fs_1)\LiA(\fs_2)=\LiA(\fs_1*\fs_2)$ and $\fs_1*\fs_2$ is an $\bbF_p$-linear combination of tuples in $\cI_w$, we have
\[
\LiA(\fs_1)\LiA(\fs_2) \in \sum_{\fs \in \cI_w} \bbF_p\LiA(\fs).
\]
The corollary now follows from Lemma \ref{lem2.3}.
\end{proof}

\subsection{Deconcatenation coproduct}

In this subsection, we introduce the deconcatenation coproduct $\Delta$, prove its compatibility with the stuffle product, and compute its action on carry relations.

\begin{definition}[deconcatenation coproduct]
Define the $K$-linear map $\Delta$: $\sS \rightarrow \sS \otimes_K \sS$ by setting
\[
\Delta(\fs) \coloneq \sum_{\substack{\fh, \ft \in \cI \\ (\fh,\ft)=\fs}} \fh \otimes \ft
\]
for any $\fs \in \cI$.
For example, $\Delta(2,6,1) = \emptyset \otimes (2,6,1) + (2) \otimes (6,1) + (2,6) \otimes (1) + (2,6,1) \otimes \emptyset$.
\end{definition}

\begin{lemma}[compatibility of $\Delta$ with $*$]\label{lem_DeltaStuffle}
For any $\fs_1,\fs_2 \in \cI$, we have
\[
\Delta(\fs_1*\fs_2) = \sum_{\substack{\fh_1,\fh_2,\ft_1,\ft_2 \in \cI \\ (\fh_1,\ft_1)=\fs_1, (\fh_2,\ft_2)=\fs_2}} (\fh_1 * \fh_2) \otimes (\ft_1 * \ft_2).
\]
\end{lemma}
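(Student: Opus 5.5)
We argue by induction on $\dep(\fs_1)+\dep(\fs_2)$, using the recursive definition of the stuffle product. It is convenient to write $P(\fs_1,\fs_2)$ for the right-hand side. We also extend $\Delta$, $*$ and $P$ bilinearly, and equip $\sS\otimes_K\sS$ with the componentwise product $(a\otimes b)(c\otimes d)=(a*c)\otimes(b*d)$. In this language the statement says that $\Delta(\fs_1*\fs_2)=\Delta(\fs_1)\,\Delta(\fs_2)$, i.e.\ that $\Delta$ is multiplicative.

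\emph{Base case.} If $\fs_1=\emptyset$, then $\fs_1*\fs_2=\fs_2$. The only splitting of $\fs_1$ is $\fh_1=\ft_1=\emptyset$, so $P(\emptyset,\fs_2)=\sum_{(\fh_2,\ft_2)=\fs_2}\fh_2\otimes\ft_2=\Delta(\fs_2)$. The case $\fs_2=\emptyset$ is symmetric.

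\emph{Key recursion for $\Delta$.} For a nonempty tuple $(a,\fu)$ with $a\in\bbZ_+$ and $\fu\in\cI$, we have
\[
\Delta(a,\fu)=\emptyset\otimes(a,\fu)+\sum_{(\fh,\ft)=\fu}(a,\fh)\otimes\ft .
\]
Write $\lambda_a$ for the $K$-linear map $\fs\mapsto(a,\fs)$. Then this reads
\[
\Delta(\lambda_a\bS)=\emptyset\otimes\lambda_a\bS+(\lambda_a\otimes\mathrm{id})\Delta(\bS)
\]
for every $\bS\in\sS$, by linearity.

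\emph{Inductive step.} Let $\fs_1=(s_1,\fs_{1-})$ and $\fs_2=(t_1,\fs_{2-})$ be nonempty. Apply $\Delta$ to the three terms of the stuffle recursion using the recursion above. The $\emptyset\otimes(\cdot)$ parts sum to $\emptyset\otimes(\fs_1*\fs_2)$. The remaining parts are
\[
(\lambda_{s_1}\otimes\mathrm{id})\Delta(\fs_{1-}*\fs_2)+(\lambda_{t_1}\otimes\mathrm{id})\Delta(\fs_1*\fs_{2-})+(\lambda_{s_1+t_1}\otimes\mathrm{id})\Delta(\fs_{1-}*\fs_{2-}),
\]
and the induction hypothesis rewrites each inner $\Delta(\cdot*\cdot)$ as the corresponding $P$.

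On the other side, expand $P(\fs_1,\fs_2)$ according to whether $\fh_1$ and $\fh_2$ are empty.
\begin{itemize}
\item The term with $\fh_1=\fh_2=\emptyset$ gives $\emptyset\otimes(\fs_1*\fs_2)$.
\item The terms with $\fh_1=\emptyset$ and $\fh_2=(t_1,\fh_2')$ nonempty give $\sum(t_1,\fh_2')\otimes(\fs_1*\ft_2)$.
\item The terms with $\fh_1\neq\emptyset$ and $\fh_2=\emptyset$ are symmetric.
\item In the terms with both nonempty, expand $(s_1,\fh_1')*(t_1,\fh_2')$ once by the stuffle recursion.
\end{itemize}

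Now regroup everything by the first entry of the left factor: $s_1$, $t_1$, or $s_1+t_1$.
\begin{itemize}
\item The $s_1$-group collects the terms with $\fh_1\ne\emptyset,\fh_2=\emptyset$ and the first stuffle term of the both-nonempty case. Together these form exactly $(\lambda_{s_1}\otimes\mathrm{id})P(\fs_{1-},\fs_2)$. The point is that splittings of $\fs_2$ with $\fh_2$ empty or nonempty together run over all of $\Delta(\fs_2)$.
\item The $t_1$-group is symmetric and gives $(\lambda_{t_1}\otimes\mathrm{id})P(\fs_1,\fs_{2-})$.
\item The $(s_1+t_1)$-group gives $(\lambda_{s_1+t_1}\otimes\mathrm{id})P(\fs_{1-},\fs_{2-})$.
\end{itemize}
Comparing the two sides completes the induction.

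The only delicate step is this bookkeeping in the $s_1$- and $t_1$-groups. There one must check that the splittings of $\fs_2$ with $\fh_2=\emptyset$ (coming from the separate case) and those with $\fh_2$ nonempty (coming from the first stuffle term) together reproduce the full sum over $(\fh_2,\ft_2)=\fs_2$, with no term omitted or counted twice. Everything else is routine.
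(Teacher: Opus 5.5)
Your proof is correct, but it takes a different route from the paper. The paper does no computation here. It cites Hoffman's theorem that deconcatenation is multiplicative for the quasi-shuffle product, which Hoffman states in characteristic $0$. It then observes that both sides are formal sums of tuples with integer coefficients, so the identity holds over $\mathbb{Z}$ and therefore after passing to $\bbF_p\subset K$.

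You instead prove $\Delta(\fs_1*\fs_2)=\Delta(\fs_1)\Delta(\fs_2)$ directly, by induction on $\dep(\fs_1)+\dep(\fs_2)$. The tools are the recursion $\Delta(\lambda_a\bS)=\emptyset\otimes\lambda_a\bS+(\lambda_a\otimes\mathrm{id})\Delta(\bS)$ and a matching of the $\lambda_{s_1}$-, $\lambda_{t_1}$- and $\lambda_{s_1+t_1}$-parts on the two sides. The bookkeeping checks out. In particular, the terms with $\fh_2=\emptyset$ and the first stuffle term for $\fh_2\neq\emptyset$ together give exactly $(\lambda_{s_1}\otimes\mathrm{id})P(\fs_{1-},\fs_2)$, and symmetrically for the $t_1$-part.

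Your version is self-contained and independent of the characteristic: it works unchanged over any commutative coefficient ring, so no descent step is needed. The paper's version is shorter, but it leaves the reader to accept that Hoffman's characteristic-$0$ statement is really an identity over $\mathbb{Z}$.

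One cosmetic remark: since $s_1$ may equal $t_1$, describe the regrouping as grouping by the operator ($\lambda_{s_1}$, $\lambda_{t_1}$ or $\lambda_{s_1+t_1}$) through which a term arises, not by the value of the first entry. Because you only compare three partial sums on each side, this does not affect correctness.
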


\begin{proof}
Hoffman \cite[Theorem 3.1]{Hoffman2000} proved the corresponding identity in characteristic $0$, which descends to our case, since all the coefficients descend from $\mathbb{Z}$ to $\bbF_p \subset K$.
\end{proof}

\begin{lemma}[\text{coproduct formula for carry relations}]\label{lem_DeltaCarry}
For any $\fh,\ft \in \cI$, we have
\begin{align*}
\Delta\big( \brho(\fh;\ft) \big) = &\sum_{\substack{\fh_1,\fh_2 \in \cI \\ (\fh_1,\fh_2)=\fh}} \fh_1 \otimes \brho(\fh_2;\ft) + \sum_{\substack{\ft_1,\ft_2 \in \cI \\ (\ft_1,\ft_2)=\ft}} \brho(\fh;\ft_1) \otimes \ft_2 \\
&+ D_1 \cdot \mathbbm{1}_{\fh \neq \emptyset} \cdot \sum_{\substack{\ft_1,\ft_2 \in \cI \\ (\ft_1,\ft_2)=\ft}}  (\fh^{+},\ft_1) \otimes \big((q-1)*\ft_2\big) + D_1 \cdot \sum_{\substack{\ft_1,\ft_2 \in \cI \\ (\ft_1,\ft_2)=\ft}}  (\fh,1,\ft_1) \otimes \big((q-1)*\ft_2\big).  
\end{align*}
\end{lemma}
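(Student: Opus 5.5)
We compute $\Delta$ term by term on the four summands of $\brho(\fh;\ft)$ in \eqref{def_rho}. Every cut is indexed by the position at which it splits the concatenated tuple, and we then regroup the resulting sums according to the four families in the claimed formula. Throughout we use two facts. First, $\Delta$ of a concatenation $(\mathfrak{a},\mathfrak{b})$ splits as a sum of cuts inside $\mathfrak{a}$ plus cuts inside $\mathfrak{b}$, with a cut exactly at the junction counted once. Second, Lemma~\ref{lem_DeltaStuffle} gives, for the depth-one tuple $(q-1)$,
\[
\Delta\big((q-1)*\ft_0\big)=\sum_{(\ft_1,\ft_2)=\ft_0}\Big((q-1)*\ft_1\Big)\otimes \ft_2+\sum_{(\ft_1,\ft_2)=\ft_0}\ft_1\otimes\Big((q-1)*\ft_2\Big),
\]
since $\Delta(q-1)=\emptyset\otimes(q-1)+(q-1)\otimes\emptyset$.

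\emph{Cuts inside $\fh$.} For each of the four terms, consider the cuts that lie strictly inside the $\fh$-part, or at its left end. These are the cuts $\fh=(\fh_1,\fh_2)$ with $\fh_2\neq\emptyset$. For the third term we must be careful: $\fh^{+}$ changes only the last entry of $\fh$, so for $\fh_2\ne\emptyset$ the right factor is $(\fh_2^{+},(q-1)*\ft)$. Collecting, for fixed $\fh_2\neq\emptyset$ the right factors sum exactly to $\brho(\fh_2;\ft)$. For the term $(\fh_2,{}^{q+}\ft)$ this also holds when $\ft=\emptyset$, using ${}^{q+}\emptyset=(q)$.

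The cut with $\fh_2=\emptyset$, i.e.\ right after all of $\fh$, needs separate handling. For the first, second and fourth terms it gives $\fh\otimes({}^{q+}\ft)$, $\fh\otimes(q,\ft)$ and $\fh\otimes(1,(q-1)*\ft)$. These are precisely $\fh\otimes\brho(\emptyset;\ft)$, because the indicator $\mathbbm{1}_{\fh\ne\emptyset}$ kills the third term of $\brho(\emptyset;\ft)$. For the third term, the cut between $\fh^+$ and $(q-1)*\ft$ belongs to the next group instead. This accounts for the first sum $\sum \fh_1\otimes\brho(\fh_2;\ft)$, including its $\fh_2=\emptyset$ summand.

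\emph{Cuts to the right of $\fh$.} It remains to treat the cuts after $\fh$ (after $\fh^+$ in the third term), excluding those already used. Write $\ft=(\ft_1,\ft_2)$.
\begin{itemize}
\item In $(\fh,{}^{q+}\ft)$ and $(\fh,q,\ft)$ the remaining cuts are cuts of $\ft$, but the cut at the very beginning of ${}^{q+}\ft$ has already been used. Cuts with $\ft_1\ne\emptyset$ give $(\fh,{}^{q+}\ft_1)\otimes\ft_2$. Cuts of $(\fh,q,\ft)$ at or after $q$ give $(\fh,q,\ft_1)\otimes\ft_2$; the case $\ft_1=\emptyset$ yields $(\fh,q)\otimes\ft$, which matches $(\fh,{}^{q+}\emptyset)\otimes\ft$ and thus supplies the $\ft_1=\emptyset$ term of the first summand of $\brho(\fh;\ft_1)$. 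The cut before $q$ has already been used.
\item In the terms containing $(q-1)*\ft$ we apply the displayed formula. Its first sum, combined with the prefix, gives $D_1(\fh^+,(q-1)*\ft_1)\otimes\ft_2$ and $D_1(\fh,1,(q-1)*\ft_1)\otimes\ft_2$. These complete $\brho(\fh;\ft_1)\otimes\ft_2$, with the indicator $\mathbbm{1}_{\ft_1\ne\emptyset}$ matching the second term.
\item The second sum of the displayed formula gives exactly the two extra families
\[
D_1\,\mathbbm{1}_{\fh\ne\emptyset}\,(\fh^+,\ft_1)\otimes\big((q-1)*\ft_2\big)\quad\text{and}\quad D_1\,(\fh,1,\ft_1)\otimes\big((q-1)*\ft_2\big),
\]
including the junction cut with $\ft_1=\emptyset$.
\end{itemize}
In the fourth term, the cut between $1$ and $(q-1)*\ft$ is the $\ft_1=\emptyset$ instance of the second sum. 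Its cut before $1$ was already counted in the first group.

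The main obstacle is bookkeeping, not ideas. One must verify that each cut is counted exactly once, particularly the junction cuts. One must also handle the degenerate cases $\fh=\emptyset$ or $\ft=\emptyset$, where indicators vanish and ${}^{q+}\emptyset=(q)$, $\emptyset*\cdots$ conventions apply. We would check these by listing, for each of the four terms, the positions of the cuts and assigning each to one of the four target sums.
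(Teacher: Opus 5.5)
Your proposal is correct and is essentially the paper's computation; the paper just packages your cut-by-cut bookkeeping by writing $\brho(\fh;\ft)=(\fh,\sfC\ft)+D_1(\sfE\fh,(q-1)*\ft)$ with $\sfC\ft={}^{q+}\ft+\mathbbm{1}_{\ft\neq\emptyset}(q,\ft)$ and $\sfE\fh=\mathbbm{1}_{\fh\neq\emptyset}\fh^{+}+(\fh,1)$, whose coproducts $\Delta(\sfC\ft)=\emptyset\otimes\sfC\ft+\sum\sfC\ft_1\otimes\ft_2$ and $\Delta(\sfE\fh)=\sfE\fh\otimes\emptyset+\sum\fh_1\otimes\sfE\fh_2$ handle the junction and degenerate cases uniformly. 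The only slip is at $\ft=\emptyset$: there the summand $(\fh,q)\otimes\emptyset$ of $\brho(\fh;\emptyset)\otimes\emptyset$ comes from the final cut of $(\fh,{}^{q+}\emptyset)=(\fh,q)$, not from $(\fh,q,\ft)$ (which vanishes), so the identity still holds.
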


\begin{proof}
This is a straightforward calculation.
To make the calculation more compact, we borrow the notation $\sfE$ and $\sfC$ from \cite[Definition 8.5]{HHLL2026+}. 
Define the $K$-linear operators $\sfE,\sfC: \sS \rightarrow \sS$ by setting, for each $\fs \in \cI$, 
\begin{align*}
\sfE\fs &\coloneq \mathbbm{1}_{\fs \neq \emptyset} \cdot \fs^{+} + (\fs,1), \\
\sfC\fs &\coloneq {}^{q+}\fs + \mathbbm{1}_{\fs \neq \emptyset} \cdot (q,\fs).
\end{align*}
Then, \eqref{def_rho} can be rewritten as
\[
\brho(\fh;\ft) = (\fh,\sfC\ft) + D_1 \cdot \left( \sfE\fh,(q-1)*\ft \right).
\]

Noting that 
\begin{align*}
\Delta(\sfC\ft) &= \emptyset \otimes \sfC\ft + \sum_{\substack{\ft_1,\ft_2 \in \cI \\ (\ft_1,\ft_2)=\ft}} \sfC\ft_1 \otimes \ft_2, \\
\Delta(\sfE\fh) &= \sfE\fh \otimes \emptyset + \sum_{\substack{\fh_1,\fh_2 \in \cI \\ (\fh_1,\fh_2)=\fh}} \fh_1 \otimes \sfE\fh_2, \\
\Delta\big( (q-1)*\ft \big) &= \sum_{\substack{\ft_1,\ft_2 \in \cI \\ (\ft_1,\ft_2)=\ft}} \Big( \big( (q-1)*\ft_1 \big) \otimes \ft_2 + \ft_1 \otimes \big( (q-1) * \ft_2 \big) \Big),
\end{align*}
we deduce 
\begin{align*}
\Delta\big(\brho(\fh;\ft)\big) =& \sum_{\substack{\fh_1,\fh_2 \in \cI \\ (\fh_1,\fh_2)=\fh}} \fh_1 \otimes (\fh_2,\sfC\ft)  + \sum_{\substack{\ft_1,\ft_2 \in \cI \\ (\ft_1,\ft_2)=\ft}} (\fh,\sfC\ft_1) \otimes \ft_2 \\
&+ D_1 \sum_{\substack{\fh_1,\fh_2 \in \cI \\ (\fh_1,\fh_2)=\fh}} \fh_1 \otimes \big(\sfE\fh_2,(q-1)*\ft\big) + D_1 \sum_{\substack{\ft_1,\ft_2 \in \cI \\ (\ft_1,\ft_2)=\ft}} \big( \sfE\fh, (q-1)*\ft_1 \big) \otimes \ft_2 \\
&+ D_1 \sum_{\substack{\ft_1,\ft_2 \in \cI \\ (\ft_1,\ft_2)=\ft}} (\sfE\fh,\ft_1) \otimes \big( (q-1)*\ft_2 \big) \\
=& \sum_{\substack{\fh_1,\fh_2 \in \cI \\ (\fh_1,\fh_2)=\fh}} \fh_1 \otimes \brho(\fh_2;\ft) + \sum_{\substack{\ft_1,\ft_2 \in \cI \\ (\ft_1,\ft_2)=\ft}} \brho(\fh;\ft_1) \otimes \ft_2 + D_1  \sum_{\substack{\ft_1,\ft_2 \in \cI \\ (\ft_1,\ft_2)=\ft}} (\sfE\fh,\ft_1) \otimes \big( (q-1)*\ft_2 \big),
\end{align*}
which is the desired identity.
\end{proof}

\section{Derivations}\label{sec_3}

In this section, we introduce a family of derivations on $\cZ$.
This is a key observation contributed by GPT-6 Astra in this paper.

Define $\cZ_{>0} \coloneq \bigoplus_{w=1}^{\infty} \cZ_w$. 
For any two $K$-linear subspaces $\mathcal{X}$, $\mathcal{Y}$ of $\cZ$, define
\[
\mathcal{X}\mathcal{Y} \coloneq \operatorname{Span}_{K} \left\{ xy \mid x \in \mathcal{X}, y \in \mathcal{Y}  \right\} \subset \cZ.
\]
A $K$-linear map $\partial: \cZ \rightarrow \cZ$ is called a $K$-derivation on $\cZ$ if it satisfies the Leibniz rule:
\[
\partial(xy) = \partial(x)y + x\partial(y), \quad x,y \in \cZ.
\] 
Since $1 \in K=\cZ_0 \subset \cZ$, every $K$-derivation $\partial$ satisfies $\partial(1)=0$ and hence $\partial(K)=\{0\}$.

\begin{lemma}\label{lem_deri}
Let $\ell$: $\cZ \rightarrow K$ be a $K$-linear functional such that 
\[
\ell(1) =0, \quad \ell\left( \cZ_{>0}\cZ_{>0} \right) = \{ 0 \}, \quad\text{and}\quad \ell(\LiA(q-1)) = 0.
\]
Then, there exists a $K$-derivation $\partial_{\ell}$ on $\cZ$ such that
\begin{equation}\label{2.1}
\partial_{\ell} \left( \LiA(\fs) \right) = \sum_{\substack{\fh, \ft \in \cI \\ (\fh,\ft)=\fs}}  \LiA(\fh) \cdot \ell\left(\LiA(\ft)\right) 
\end{equation}
for any $\fs \in \cI$, and $\partial_{\ell}\left( \LiA(q-1) \right) = 0$.
\end{lemma}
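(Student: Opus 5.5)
The plan is to define the derivation first on the formal level $\sS$ and then descend it to $\cZ$ through $\LiA$. Let $\delta: \sS \to \cZ$ be the $K$-linear map given on tuples by
\[
\delta(\fs) \coloneq \sum_{\substack{\fh,\ft \in \cI \\ (\fh,\ft)=\fs}} \LiA(\fh)\cdot \ell\left(\LiA(\ft)\right),
\]
that is, $\delta = \mu\circ(\LiA \otimes (\ell\circ\LiA))\circ\Delta$, where $\mu$ is multiplication in $\cZ$. By Theorem \ref{thm_FpSpan}, $\LiA: \sS \to \cZ$ is surjective. So if $\delta$ vanishes on $\ker(\LiA)$, it factors as $\delta = \partial_\ell \circ \LiA$ for a unique $K$-linear map $\partial_\ell: \cZ\to\cZ$, and this map satisfies \eqref{2.1} by construction. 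By Theorem \ref{lem_kerLi}, $\ker(\LiA)$ is spanned by the (special) carry relations, so it suffices to prove $\delta(\brho(\fh;\ft))=0$ for all $\fh,\ft\in\cI$.

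\textbf{Vanishing on carry relations (the main step).} I would apply $\LiA\otimes(\ell\circ\LiA)$ to the four sums in Lemma \ref{lem_DeltaCarry} and treat them separately.
\begin{itemize}
\item In the first sum, each term contains the factor $\ell(\LiA(\brho(\fh_2;\ft)))$, which is $0$ by Lemma \ref{lem_gIKN}.
\item In the second sum, each term contains the factor $\LiA(\brho(\fh;\ft_1))$, which is $0$ for the same reason.
\item The third and fourth sums contain the factor $\ell(\LiA((q-1)*\ft_2)) = \ell(\LiA(q-1)\LiA(\ft_2))$. If $\ft_2=\emptyset$, this equals $\ell(\LiA(q-1))=0$ by hypothesis. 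If $\ft_2\neq\emptyset$, then $\LiA(q-1)\in\cZ_{q-1}$ and $\LiA(\ft_2)\in\cZ_{\wt(\ft_2)}$, both of positive weight by Theorem \ref{thm_FpSpan}. Their product therefore lies in $\cZ_{>0}\cZ_{>0}$ and is killed by $\ell$.
\end{itemize}
Hence $\delta(\brho(\fh;\ft))=0$. This is the step I expect to be the crux, and it is exactly why the hypothesis $\ell(\LiA(q-1))=0$ is imposed. With Lemma \ref{lem_DeltaCarry} in hand, however, the computation is short.

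\textbf{Leibniz rule.} Since $\cZ$ is spanned by the elements $\LiA(\fs)$ and $\partial_\ell$ is $K$-linear, it suffices to check
\[
\partial_\ell(\LiA(\fs_1)\LiA(\fs_2))=\partial_\ell(\LiA(\fs_1))\LiA(\fs_2)+\LiA(\fs_1)\partial_\ell(\LiA(\fs_2)).
\]
The left side equals $\delta(\fs_1*\fs_2)$, which by Lemma \ref{lem_DeltaStuffle} is
\[
\sum \LiA(\fh_1)\LiA(\fh_2)\,\ell\big(\LiA(\ft_1)\LiA(\ft_2)\big),
\]
summed over all decompositions $(\fh_1,\ft_1)=\fs_1$ and $(\fh_2,\ft_2)=\fs_2$. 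A term in which both $\ft_1$ and $\ft_2$ are nonempty vanishes, because then the argument of $\ell$ lies in $\cZ_{>0}\cZ_{>0}$. A term in which both are empty vanishes because $\ell(1)=0$. The terms with $\ft_2=\emptyset$ (so $\fh_2=\fs_2$) sum to $\partial_\ell(\LiA(\fs_1))\LiA(\fs_2)$, and those with $\ft_1=\emptyset$ sum to $\LiA(\fs_1)\partial_\ell(\LiA(\fs_2))$. This gives the Leibniz rule.

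\textbf{The value at $\LiA(q-1)$.} The deconcatenations of $(q-1)$ are $\emptyset\otimes(q-1)$ and $(q-1)\otimes\emptyset$, so
\[
\partial_\ell(\LiA(q-1)) = \ell(\LiA(q-1)) + \LiA(q-1)\,\ell(1) = 0.
\]
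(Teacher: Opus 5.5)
Your proposal is correct and is essentially the paper's proof: you use the same map $(\LiA\otimes(\ell\circ\LiA))\circ\Delta$ on $\sS$, show it vanishes on carry relations via Lemma~\ref{lem_DeltaCarry} and Lemma~\ref{lem_gIKN}, descend it through Theorem~\ref{lem_kerLi}, and obtain the Leibniz rule from Lemma~\ref{lem_DeltaStuffle}. The differences are cosmetic: you check the Leibniz rule after descending rather than on $\sS$ beforehand, you get $\ell\bigl(\LiA(q-1)\LiA(\ft_2)\bigr)=0$ by a direct case split instead of the identity $\ell(xy)=\varepsilon(x)\ell(y)+\ell(x)\varepsilon(y)$, and you state explicitly the surjectivity of $\LiA$ that the paper leaves implicit.
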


\begin{proof}
First, let
\[
\varepsilon: \mathcal{Z} \longrightarrow K
\]
be the projection onto the weight-zero part. 
Because $\mathcal{Z}=K \oplus \mathcal{Z}_{>0}$, every $x, y \in \mathcal{Z}$ can be written uniquely as
\[
x=\varepsilon(x)+x_{+}, \quad y=\varepsilon(y)+y_{+}, \quad x_{+}, y_{+} \in \mathcal{Z}_{>0} .
\]
Expanding their product gives
\[
x y=\varepsilon(x) \varepsilon(y)+\varepsilon(x) y_{+}+\varepsilon(y) x_{+}+x_{+} y_{+}.
\]
Now $\ell$ kills the first term because $\ell(1)=0$, and the last term because $\ell\left(\mathcal{Z}_{>0}\mathcal{Z}_{>0}\right)=\{0\}$. Consequently,
\begin{equation}\label{2.2}
\ell(xy) = \varepsilon(x)\ell(y) + \ell(x)\varepsilon(y), \quad x,y \in \cZ.
\end{equation}
In particular, taking $y=\LiA(q-1)$ in \eqref{2.2} and using the condition $\ell(\LiA(q-1))=0$, we obtain
\begin{equation}\label{2.3}
\ell\Big(\LiA(q-1) \cdot x \Big) =0, \quad x \in \cZ.
\end{equation}

Now define a $K$-linear map 
\[
\widetilde{\partial}_\ell: \mathscr{S} \longrightarrow \mathcal{Z}
\]
by setting, for each $\fs \in \cI$, 
\begin{equation}\label{2.4}
\widetilde{\partial}_\ell (\fs) \coloneq \left( \LiA \otimes (\ell \circ \LiA) \right)(\Delta(\fs)) =\sum_{\substack{\fh, \ft \in \cI \\ (\fh,\ft)=\fs}} \LiA(\fh) \cdot \ell\left(\LiA(\ft)\right).
\end{equation}
We claim that 
\begin{equation}\label{eqn_formal_Leib}
\widetilde{\partial}_\ell(\fs_1 * \fs_2) = \widetilde{\partial}_\ell(\fs_1) \LiA(\fs_2) + \LiA(\fs_1)\widetilde{\partial}_\ell(\fs_2), \quad \fs_1,\fs_2 \in \cI.
\end{equation}
In fact, applying the operator $\LiA \otimes (\ell \circ \LiA)$ to the identity in Lemma \ref{lem_DeltaStuffle}, and then using \eqref{2.2}, we have 
\begin{align*}
\widetilde{\partial}_\ell(\fs_1 * \fs_2) &= \sum_{\substack{\fh_1,\fh_2,\ft_1,\ft_2 \in \cI \\ (\fh_1,\ft_1)=\fs_1, (\fh_2,\ft_2)=\fs_2}} \LiA(\fh_1)\cdot \LiA(\fh_2) \cdot \ell\left(\LiA(\ft_1)\LiA(\ft_2)\right) \\
&= \sum_{\substack{\fh_1,\fh_2,\ft_1,\ft_2 \in \cI \\ (\fh_1,\ft_1)=\fs_1, (\fh_2,\ft_2)=\fs_2}} \LiA(\fh_1)\cdot \LiA(\fh_2) \cdot \Big( \varepsilon\left(\LiA(\ft_1)\right) \cdot \ell\left(\LiA(\ft_2)\right) + \ell\left(\LiA(\ft_1)\right) \cdot \varepsilon\left(\LiA(\ft_2)\right) \Big).
\end{align*}
Since $\varepsilon\left(\LiA(\ft)\right) = \mathbbm{1}_{\ft=\emptyset}$, we deduce that
\begin{align*}
\widetilde{\partial}_\ell(\fs_1 * \fs_2) &= \LiA(\fs_1) \sum_{\substack{\fh_2,\ft_2 \in \cI \\ (\fh_2,\ft_2)=\fs_2}} \LiA(\fh_2) \cdot \ell\left(\LiA(\ft_2)\right) + \LiA(\fs_2) \sum_{\substack{\fh_1,\ft_1 \in \cI \\ (\fh_1,\ft_1)=\fs_1}} \LiA(\fh_1) \cdot \ell\left(\LiA(\ft_1)\right) \\
&= \LiA(\fs_1)\widetilde{\partial}_\ell(\fs_2) + \widetilde{\partial}_\ell(\fs_1) \LiA(\fs_2),
\end{align*}
which proves \eqref{eqn_formal_Leib}. 
By bilinearity, \eqref{eqn_formal_Leib} extends to 
\begin{equation}\label{2.6}
\widetilde{\partial}_\ell(\bS_1 * \bS_2) = \widetilde{\partial}_\ell(\bS_1) \LiA(\bS_2) + \LiA(\bS_1)\widetilde{\partial}_\ell(\bS_2), \quad \bS_1,\bS_2 \in \sS.
\end{equation}

We now prove that $\widetilde{\partial}_\ell$ kills any carry relation.
Indeed, applying the operator $\LiA \otimes (\ell \circ \LiA)$ to the identity in Lemma \ref{lem_DeltaCarry} gives, for all $\fh,\ft \in \cI$,
\begin{align*}
\widetilde{\partial}_\ell\big( \brho(\fh;\ft) \big) = &\sum_{\substack{\fh_1,\fh_2 \in \cI \\ (\fh_1,\fh_2)=\fh}} \LiA(\fh_1) \cdot \ell\Big( \LiA\big(\brho(\fh_2;\ft)\big) \Big) + \sum_{\substack{\ft_1,\ft_2 \in \cI \\ (\ft_1,\ft_2)=\ft}} \LiA\big(\brho(\fh;\ft_1)\big) \cdot \ell\Big( \LiA(\ft_2) \Big) \\
&+ D_1 \cdot \mathbbm{1}_{\fh \neq \emptyset} \cdot \sum_{\substack{\ft_1,\ft_2 \in \cI \\ (\ft_1,\ft_2)=\ft}}  \LiA(\fh^{+},\ft_1) \cdot \ell\Big(\LiA(q-1) \cdot \LiA(\ft_2) \Big) \\
&+ D_1 \cdot \sum_{\substack{\ft_1,\ft_2 \in \cI \\ (\ft_1,\ft_2)=\ft}}  \LiA(\fh,1,\ft_1) \cdot \ell\Big( \LiA(q-1) \cdot \LiA(\ft_2) \Big). 
\end{align*}  
By Lemma \ref{lem_gIKN} and Equation \eqref{2.3}, each summand on the right-hand side above is zero. 
Thus, 
\[
\widetilde{\partial}_\ell\big( \brho(\fh;\ft) \big) = 0, \quad \fh,\ft \in \cI.
\]
Therefore, by Theorem \ref{lem_kerLi}, 
\[
\ker\left( \widetilde{\partial}_\ell: \sS \rightarrow \cZ \right) \supset \ker \left( \LiA: \sS \rightarrow \cZ \right).
\] 
Consequently, $\widetilde{\partial}_\ell$ induces a $K$-linear map $\partial_{\ell}: \cZ \rightarrow \cZ$ satisfying $\partial_{\ell}(\LiA(\fs)) = \widetilde{\partial}_\ell(\fs)$ for every $\fs \in \cI$.
Equation \eqref{2.4} descends to Equation \eqref{2.1}.
Equation \eqref{2.6} descends to the Leibniz rule for $\partial_{\ell}$.
By \eqref{2.4} and $\ell(1)=\ell(\LiA(q-1))=0$, we have $\widetilde{\partial}_\ell((q-1))=0$, which descends to $\partial_{\ell}(\LiA(q-1))=0$. 
Thus, $\partial_{\ell}$ satisfies all the requirements.
\end{proof}

\section{The polynomiality of $\cZ$}\label{sec_4}

In this section, we prove the polynomiality of $\cZ$.
The key tool is Lemma \ref{lem_deri}.
We first select the generator candidates by lifting a homogeneous basis of a certain quotient space.
Then, Lemma \ref{lem_deri} provides many $K$-derivations on $\cZ$, so that we can apply them to a hypothetical algebraic relation among the selected generators to derive a contradiction.

An element $z \in \cZ = \bigoplus_{w=0}^{\infty} \cZ_w$ is called a homogeneous element if $z \in \cZ_w$ for some $w$.
For any $z \in \cZ_w \setminus \{0\}$, define $\wt(z)=w$.
This notation is compatible with the weight notation on $\cI$: $\wt(\LiA(\fs))=\wt(\fs)$ for any $\fs \in \cI$.

\begin{theorem}\label{thm4.1}
Let 
\[
z_0=\LiA(q-1) \quad\text{and}\quad V=\cZ_{>0}/\left(\cZ_{>0}\cZ_{>0}+ Kz_0\right).
\]
Choose any set of homogeneous elements $\{y_i \}_{i \in I} \subset \cZ_{>0}$ such that their classes $\overline{y_i} = y_i + \left(\cZ_{>0}\cZ_{>0}+ Kz_0\right)$ form a $K$-linear basis of $V$. 
Then, the elements $z_0$ and $y_i$ $(i \in I)$ freely generate the algebra $\cZ$. 
That is, $\cZ = K[z_0, \{y_i\}_{i \in I}]$ and $\{z_0\} \cup \{y_i\}_{i \in I}$ is an algebraic independent set over $K$.
In particular, $\cZ$ is a graded polynomial algebra.
\end{theorem}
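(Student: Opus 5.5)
Throughout, write $J \coloneq \cZ_{>0}\cZ_{>0}$. The proof has two parts, generation and algebraic independence; the second will come from the derivations of Lemma \ref{lem_deri}.

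\emph{Generation.} We show by induction on $w$ that $\cZ_w \subset K[z_0,\{y_i\}]$. The case $w=0$ is trivial. Let $w\ge 1$ and $x\in\cZ_w$. Since the $y_i$ and $z_0$ are homogeneous and $J$ is graded by Theorem \ref{thm_Chang}, the basis property of the classes $\overline{y_i}$ lets us write
\[
x=\sum_{\wt(y_i)=w} c_i y_i + c\,z_0 + u, \qquad u\in J\cap\cZ_w ,
\]
with $c=0$ unless $w=q-1$. The element $u$ is a sum of products of homogeneous elements of smaller positive weight. By the induction hypothesis these lie in $K[z_0,\{y_i\}]$, hence so does $x$.

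\emph{Algebraic independence.} We argue by contradiction. Suppose $P$ is a nonzero polynomial with $P(z_0,y_{i_1},\dots,y_{i_m})=0$; we may assume $P$ has minimal total degree. Because everything is graded, we may also take $P$ weighted-homogeneous, giving variable $y_i$ weight $\wt(y_i)$ and $z_0$ weight $q-1$. For each index $j$ we build a linear functional $\ell_j:\cZ\to K$ as follows: $\ell_j$ vanishes on $K$, on $J$ and on $Kz_0$, and on $\cZ_{>0}$ it is the composite of the projection to $V$ with the dual-basis functional $\overline{y_i}\mapsto\delta_{ij}$. This functional satisfies the three hypotheses of Lemma \ref{lem_deri}, so it yields a derivation $\partial_j\coloneq\partial_{\ell_j}$ with $\partial_j(z_0)=0$.

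\emph{Key step.} We must compute $\partial_j(y_i)$. By \eqref{2.1}, $\partial_j(x)$ is congruent to $\ell_j(x)\cdot 1$ modulo $\cZ_{>0}$: the term with $\fh=\emptyset$ contributes $\ell_j(\LiA(\fs))$, and every other term has $\fh\neq\emptyset$. Moreover $\partial_j$ lowers weight by $\wt(y_j)$ on the part coming from $y_j$. Choose $j$ so that $y_j$ has maximal weight among the variables occurring in $P$. Then for any $y_i$ occurring in $P$,
\[
\partial_j(y_i)\in\cZ_{\wt(y_i)-\wt(y_j)},
\]
which is zero if $\wt(y_i)<\wt(y_j)$, and equals the constant $\ell_j(y_i)=\delta_{ij}$ if $\wt(y_i)=\wt(y_j)$.

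Applying $\partial_j$ to the relation and using the Leibniz rule gives
\[
0=\sum_i \partial_j(y_i)\,\frac{\partial P}{\partial Y_i}=\frac{\partial P}{\partial Y_j}(z_0,y).
\]
This is a relation of smaller degree. It is nontrivial unless $\partial P/\partial Y_j=0$ identically, and that can happen in characteristic $p$ when $P$ is a polynomial in $Y_j^p$.

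\emph{Main obstacle.} The main obstacle is exactly this characteristic-$p$ issue. I would handle it by a second induction on the number of variables. Write $P=\sum_k P_k\, Y_j^{k}$ as a polynomial in $Y_j$ over the remaining variables. The derivation $\partial_j$ acts as $\partial/\partial Y_j$ on the subring generated by the other variables together with $y_j$, since it kills the other variables. An alternative is to choose $P$ of minimal degree in $Y_j$ and use the derivation relation $\sum_k k P_k y_j^{k-1}=0$ together with minimality; this forces $P_k=0$ whenever $p\nmid k$.

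It then remains to treat $P\in K[\dots][Y_j^p]$. To reduce this case I would use the fact that $\cZ$ sits inside the field $K_\infty$, and I would need to descend the relation from $p$-th powers. This is delicate, because $K$ is not perfect. If the direct approach fails, I would instead use more derivations, for example $\ell$ supported on higher weights, to separate the variables, or prove independence over $K$ via a Frobenius-twisted argument. I expect this step to need the most care.
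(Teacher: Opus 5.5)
\paragraph{Verdict.} There is a genuine gap: the characteristic-$p$ step is left open, and the $z_0$-variable is never handled.

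\paragraph{What is correct.} Your generation step and your computation of $\partial_j(y_i)$ are correct and match the paper. The paper applies all $\partial_i$ at once: ordered by weight, the matrix $(\partial_i(y_j))$ is unitriangular, so $(\partial F/\partial X_j)(z_0,y)=0$ for every $j\ge 1$.

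\paragraph{Why your suggested remedies fail.} The proof stops at the characteristic-$p$ step, which you flag as the main obstacle, and the remedies you suggest do not fill it. Every $K$-derivation $\partial$ of $\cZ$ satisfies $\partial(c\,x^p)=c\,p\,x^{p-1}\partial(x)=0$ for $c\in K$. Hence a relation $F(z_0,y)=0$ with $F\in K[X_0^p,\dots,X_m^p]$ is invisible to every derivation, whatever functional $\ell$ you feed into Lemma \ref{lem_deri}. Adding more derivations, for instance ones supported on higher weights, therefore cannot help. Moreover, every $\partial_\ell$ kills $z_0$ by construction. So your variable-by-variable reduction never controls the exponent of $z_0$, and without $p$-divisibility in that variable as well you cannot begin to extract $p$-th roots.

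\paragraph{How the paper closes the argument.} Take $F$ weighted-homogeneous of minimal weighted degree $N$.
\begin{enumerate}
\item Evaluate the weighted Euler identity $NF=(q-1)X_0\,\partial F/\partial X_0+\sum_{j}\wt(y_j)X_j\,\partial F/\partial X_j$ at $(z_0,y)$. Since $q-1\neq 0$ in $K$ and $z_0\neq 0$, this gives $(\partial F/\partial X_0)(z_0,y)=0$.
\item All partial derivatives of $F$ are now relations that are homogeneous of smaller weight. Minimality forces them to vanish identically, so $F\in K[X_0^p,\dots,X_m^p]$.
\item The decomposition $K=\bigoplus_{r=0}^{p-1}\theta^rK^p$ lets you write $F=\sum_{r=0}^{p-1}\theta^rG_r^p$, where each nonzero $G_r$ is homogeneous of weight $N/p$.
\item The decisive arithmetic input is that $1,\theta,\dots,\theta^{p-1}$ remain linearly independent over $K_\infty^p$. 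Equivalently, $K_\infty=\bigoplus_{r=0}^{p-1}\theta^rK_\infty^p$, which follows by grouping Laurent exponents modulo $p$.
\item Since $G_r(z_0,y)\in K_\infty$, the relation $\sum_r\theta^rG_r(z_0,y)^p=0$ forces $G_r(z_0,y)=0$ for every $r$. A nonzero $G_r$ then contradicts the minimality of $N$.
\end{enumerate}
This is the concrete form of the ``descent from $p$-th powers inside $K_\infty$'' you allude to; without it, and without the Euler-identity treatment of $z_0$, the proof does not close.
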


\begin{proof}
First, we prove the generation property 
\begin{equation}\label{4.0}
K[z_0, \{y_i\}_{i \in I}] = \cZ.
\end{equation}
We show by induction on $w$ that $\cZ_{w} \subset K[z_0, \{y_i\}_{i \in I}]$. 
The base case $w=0$ follows from $\cZ_0 = K$.
Let $x \in \cZ_{w}$ with $w>0$.
Since $\{\overline{y}_i\}_{i \in I}$ is a $K$-linear basis of $V = \cZ_{>0}/\left(\cZ_{>0}\cZ_{>0}+ Kz_0\right)$, and $y_i$ are homogeneous elements, there exist coefficients $c_i, c \in K$, where $c=0$ unless $w=q-1$, such that
\[
x - \sum_{\substack{i \in I \\ \wt(y_i)=w}} c_i y_i - cz_0 \in \left( \cZ_{>0} \cZ_{>0} \right) \cap \cZ_w = \sum_{n=1}^{w-1} \cZ_n\cZ_{w-n}.
\]
By the induction hypothesis, $\cZ_{n} \subset K[z_0, \{y_i\}_{i \in I}]$ for any $n<w$. 
Therefore, $x-\sum c_iy_i - cz_0 \in K[z_0, \{y_i\}_{i \in I}]$ and thus $x \in K[z_0, \{y_i\}_{i \in I}]$. 
This completes the induction step and hence proves $\cZ = K[z_0, \{y_i\}_{i \in I}]$.

Next, we construct derivations that distinguish the $y_i$. For each $i \in I$, define the $K$-linear map $\ell_i: \cZ \rightarrow K$ as follows:
it vanishes on $\cZ_0=K$, and on $\cZ_{>0}$ it extracts the coefficient of $\overline{y}_i$ in the basis expansion in $V$. 
Thus, for any $i,j \in I$,
\[
\ell_i(y_j) = \delta_{i,j}, \quad \ell_i\left(\cZ_{>0} \cZ_{>0}\right) =\{0\}, \quad \ell_i(z_0) =0, \quad\text{and}\quad \ell_i(1)=0,
\]
where $\delta_{i,j} \in \{0,1\} \subset K$ is the Kronecker delta.
Since $y_j$ are homogeneous elements, $\ell_{i}$ is supported on $\cZ_{\wt(y_i)}$.
Now, Lemma \ref{lem_deri} gives a $K$-derivation
\[
\partial_i = \partial_{\ell_i}: \cZ \rightarrow \cZ
\]
for each $i \in I$.
We claim that
\begin{equation}\label{4.1}
\partial_i(z_0)=0, \quad \partial_i\left( \cZ_w \right) = \{0\} \text{ if } w<\wt(y_i), \quad\text{and } \partial_i(x) = \ell_i(x) \text{ if } x \in \cZ_{\wt(y_i)}.
\end{equation}
Indeed, $\partial_i(z_0)=0$ follows from the conclusion of Lemma \ref{lem_deri}.
Since $\ell_i$ is supported on $\cZ_{\wt(y_i)}$, in
\[
\partial_{i} \big( \LiA(\fs) \big) = \sum_{\substack{\fh, \ft \in \cI \\ (\fh,\ft)=\fs}}  \LiA(\fh) \cdot \ell_i\big(\LiA(\ft)\big), \quad \fs \in \cI,
\]
only terms with $\wt(\ft) = \wt(y_i)$ contribute to the sum. 
This proves the latter two assertions in \eqref{4.1}. 
Consequently, 
\begin{equation}\label{4.2}
\partial_i(z_0) =0, \quad\text{and } \partial_i\left(y_j\right)= \begin{cases}0, &\text{if } \wt(y_j)<\wt(y_i), \\ \delta_{i,j}, &\text{if } \wt(y_j)=\wt(y_i) .\end{cases} 
\end{equation}

Finally, we prove the $K$-algebraic independence of $\{z_0\} \cup \{y_i\}_{i \in I}$. 
Suppose that a polynomial relation exists. 
It involves only finitely many of the $y_i$; after relabelling, fix these as
\[
y_1, \ldots, y_m, \quad \wt(y_1) \le \cdots \le \wt(y_m).
\]
Assign the weighted degree
\[
\operatorname{deg} X_0= q-1, \quad \operatorname{deg} X_i=\wt(y_i), \quad i=1,\ldots,m,
\]
to the polynomial ring
\[
K\left[X_0, X_1, \ldots, X_m\right].
\]
Evaluation at $\left(z_0, y_1, \ldots, y_m\right)$ respects weights. 
Therefore, by Theorem \ref{thm_Chang}, every homogeneous component of a polynomial relation is itself a relation. 
Choose a nontrivial homogeneous polynomial relation
\begin{equation}\label{4.3}
F\left(z_0, y_1, \ldots, y_m\right)=0 
\end{equation}
of the smallest possible weighted degree $N = \deg F$ in the polynomial ring $K\left[X_0, X_1, \ldots, X_m\right]$. 
Necessarily $N>0$, since a nonzero scalar cannot evaluate to zero.

Apply $\partial_i$ to \eqref{4.3}. 
Since $\partial_i$ kills coefficients in $K$ and kills $z_0$, the chain rule gives
\begin{equation}\label{4.4}
\sum_{j=1}^m \frac{\partial F}{\partial X_j}\left(z_0, y_1, \ldots, y_m\right) \partial_i\left(y_j\right) = 0, \quad i=1,\ldots,m. 
\end{equation}
By \eqref{4.2}, the matrix 
\[
\Big(\partial_i\left(y_j\right)\Big)_{1 \le i, j \le m}
\]
is upper triangular, with every diagonal entry equal to $1$.
Therefore, $\det \big(\partial_i\left(y_j\right)\big)_{1 \le i, j \le m} = 1$, so \eqref{4.4} implies
\begin{equation}\label{4.5}
\frac{\partial F}{\partial X_j}\left(z_0, y_1, \ldots, y_m\right)=0, \quad j=1,\ldots,m.
\end{equation}
We must also handle differentiation with respect to $X_0$, corresponding to $z_0$. 
Since $F$ is homogeneous, the weighted Euler identity is
\begin{equation}\label{4.6}
N F = (q-1) X_0 \frac{\partial F}{\partial X_0}+\sum_{j=1}^m \wt(y_j) X_j \frac{\partial F}{\partial X_j}.
\end{equation}
This follows term by term for each monomial in $F$ and is valid in characteristic $p$.

Evaluating \eqref{4.6} at $(z_0,y_1,\ldots,y_m)$, and using \eqref{4.3} and \eqref{4.5}, we obtain
\[
(q-1) z_0 \frac{\partial F}{\partial X_0}\left(z_0, y_1, \ldots, y_m\right)=0.
\]
Since $q-1 \neq 0 \in K_{\infty}$ and $z_0 = \LiA(q-1)=\zeta_A(q-1) \neq 0 \in K_{\infty}$, 
\begin{equation}\label{4.7}
\frac{\partial F}{\partial X_0}\left(z_0, y_1, \ldots, y_m\right)=0.
\end{equation}
Every nonzero formal partial derivative of $F$ is homogeneous of weight strictly smaller than $N$. 
Equations \eqref{4.5} and \eqref{4.7}, together with the minimal choice of $N$, therefore imply
\begin{equation}\label{4.8}
\frac{\partial F}{\partial X_j}=0 \text { as a polynomial, for every } j=0,1,\ldots,m. 
\end{equation}
Equation \eqref{4.8} tells us precisely that
\begin{equation}\label{4.9}
F \in K\left[X_0^p, X_1^p, \ldots, X_m^p\right]. 
\end{equation}
Indeed, any monomial in the homogeneous polynomial $F$ with an exponent not divisible by $p$ would give a nonzero contribution to the corresponding partial derivative that cannot be cancelled.

We cannot immediately write $F=G^p$ for some polynomial $G$, because $K=\mathbb{F}_q(\theta)$ is imperfect. 
Instead, use
\begin{equation}\label{4.10}
K=\bigoplus_{r=0}^{p-1} \theta^r K^p.
\end{equation}
Here $K^p=\mathbb{F}_q\left(\theta^p\right)$, since $\mathbb{F}_q$ is perfect. 
To prove the existence of the decomposition, write a rational function as
\[
\frac{A(\theta)}{B(\theta)}=\frac{A(\theta) B(\theta)^{p-1}}{B(\theta)^p}
\]
and group the numerator's powers of $\theta$ according to their exponents modulo $p$. 
Uniqueness follows by clearing denominators and comparing these residue classes.
By \eqref{4.9} and \eqref{4.10}, we can write
\[
F=\sum_{\boldsymbol{\beta}} c_{\boldsymbol{\beta}} X_0^{p \beta_0} \cdots X_m^{p \beta_m}
\]
and decompose each coefficient $c_{\boldsymbol{\beta}} \in K$ uniquely as
\[
c_{\boldsymbol{\beta}} = \sum_{r=0}^{p-1} \theta^r b_{r, \boldsymbol{\beta}}^p, \quad b_{r, \boldsymbol{\beta}} \in K.
\]
Setting
\[
G_r=\sum_{\boldsymbol{\beta}} b_{r, \boldsymbol{\beta}} X_0^{\beta_0} \cdots X_m^{\beta_m} \in K[X_0,\ldots,X_m],
\]
we obtain
\begin{equation}\label{4.11}
F=\sum_{r=0}^{p-1} \theta^r G_r^p. 
\end{equation}
Since $F$ is nonzero, at least one $G_r$ is nonzero. 
Moreover, each nonzero $G_r$ is homogeneous of weighted degree $N / p <N$.
The corresponding decomposition of $K_{\infty}=\mathbb{F}_q\left(\!\left(\theta^{-1}\right)\!\right)$ is
\begin{equation}\label{4.12}
K_{\infty}=\bigoplus_{r=0}^{p-1} \theta^r K_{\infty}^p, \quad K_{\infty}^p=\mathbb{F}_q\left(\!\left(\theta^{-p}\right)\!\right).
\end{equation}
Equation \eqref{4.12} follows by grouping Laurent exponents modulo $p$. 
The Laurent exponents occurring in $\theta^r K_{\infty}^p$ are congruent to $r$ modulo $p$. 
Terms belonging to different residue classes cannot cancel.

Evaluating \eqref{4.11} at $(z_0,y_1,\ldots,y_m)$, we get
\[
\sum_{r=0}^{p-1} \theta^r G_r\left(z_0, y_1, \ldots, y_m\right)^p =0.
\]
Since the sum in \eqref{4.12} is direct, we have
\[
G_r\left(z_0, y_1, \ldots, y_m\right) =0, \quad r=0,1,\ldots,p-1.
\]
A nonzero $G_r$ therefore gives an algebraic relation of weight $N / p<N$, contradicting the minimality of $N$.
This proves the algebraic independence of $\{z_0\} \cup \{y_i\}_{i \in I}$ over $K$. 
Together with \eqref{4.0}, we obtain the $K$-algebra isomorphism 
\[
K\left[X_0, \left\{X_i\right\}_{i \in I} \right] \simeq \mathcal{Z}, \quad X_0 \longmapsto z_0, \quad X_i \longmapsto y_i.
\]
Therefore, $\cZ$ is a graded polynomial algebra.
\end{proof}

The preceding proof uses the distinguished value $\LiA(q-1)$ to prove the polynomiality of $\cZ$. 
This is mainly because in Lemma \ref{lem_deri} we need the condition $\ell(\LiA(q-1)) = 0$ to ensure that $\widetilde{\partial}_{\ell}$ kills all carry relations and thereby descends to a $K$-derivation $\partial_{\ell}$ on $\cZ$. 
However, once Theorem \ref{thm4.1} is proved, we can easily remove the particular role of $\LiA(q-1)$ and deduce the following structural theorem of $\cZ$.
Recall the notation $\cZ_{\le w}^{\textup{alg}} = K[\zeta_A(\fs) : \wt(\fs) \le w]$.

\begin{theorem}\label{thm4.2}
For any weight $n \ge 1$, let 
\[
Q_n = \cZ_{n} \Big/ \sum_{r=1}^{n-1} \cZ_{r}\cZ_{n-r} \quad\text{and}\quad b_n = \dim_{K} Q_n.
\]
Choose any elements $t_{n,1},\ldots,t_{n,b_n} \in \cZ_{n}$ such that their classes $\overline{t_{n,j}}=t_{n,j} + \sum_{r=1}^{n-1} \cZ_{r}\cZ_{n-r}$ $(j=1,\ldots,b_n)$ form a basis of $Q_n$.
Then, the elements $t_{n,j}$ $(n \in \bbZ_+, 1 \le j \le b_n)$ freely generate the $K$-algebra $\cZ$. 
Moreover, for any $w \ge 1$, the elements $t_{n,j}$ $(1 \le n \le w, 1 \le j \le b_n)$ freely generate the $K$-algebra $\cZ_{\le w}^{\textup{alg}}$.
\end{theorem}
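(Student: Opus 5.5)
We reduce Theorem \ref{thm4.2} to Theorem \ref{thm4.1} by comparing the two quotient spaces. Write $W=\cZ_{>0}\cZ_{>0}$, so that $Q_n$ is the weight-$n$ component of $\cZ_{>0}/W$; this uses Theorem \ref{thm_Chang}, which gives $W\cap\cZ_n=\sum_{r=1}^{n-1}\cZ_r\cZ_{n-r}$. The first step is the spanning statement. By induction on $w$ we show that $\cZ_w\subset K[t_{n,j}: n\le w]$, arguing exactly as in the generation part of the proof of Theorem \ref{thm4.1}, but now without $z_0$. Any $x\in\cZ_w$ differs from a $K$-combination of the $t_{w,j}$ by an element of $\sum_{r=1}^{w-1}\cZ_r\cZ_{w-r}$, and this element lies in $K[t_{n,j}: n<w]$ by induction. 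Hence $K[t_{n,j}:n\le w]\supset \cZ_{\le w}^{\textup{alg}}$. The reverse inclusion is clear because each $t_{n,j}\in\cZ_n$ with $n\le w$. The same argument also gives $\cZ=K[t_{n,j}]$.

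For algebraic independence we split according to whether $z_0=\LiA(q-1)$ lies in $W$.

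\emph{Case $z_0\notin W$.} Then $z_0$ is nonzero in $Q_{q-1}$, so we may extend $\{\overline{z_0}\}$ to a basis of $Q_{q-1}$. This produces a set $\{z_0\}\cup\{y_i\}$ satisfying the hypothesis of Theorem \ref{thm4.1}: the classes of the $y_i$ form a basis of $V$, where we take $y_i=t_{n,j}$ for $n\ne q-1$, and in weight $q-1$ we take the complementary basis elements. By Theorem \ref{thm4.1}, $\cZ$ is then a polynomial algebra on a set $G$ of homogeneous generators with exactly $b_n$ generators in weight $n$. The given family $\{t_{n,j}\}$ also has $b_n$ elements in each weight $n$ and generates $\cZ$.

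The independence of $\{t_{n,j}\}$ should then follow from a Hilbert series count, applied weight by weight to the finite-dimensional spaces $\cZ_w$. Let $P$ be the weighted polynomial ring $K[X_{n,j}]$ with $\deg X_{n,j}=n$. The surjection $P\to\cZ$ given by $X_{n,j}\mapsto t_{n,j}$ is graded. Since $\cZ\simeq K[G]$ and $G$ has the same number of generators in each degree, $\dim_K P_w=\dim_K\cZ_w$ for every $w$. Each $P_w$ is finite-dimensional because every generator has positive degree, so the surjection $P_w\to\cZ_w$ is a bijection. The same count restricted to generators of weight $\le w$ gives freeness for $\cZ_{\le w}^{\textup{alg}}$. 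Here we use the spanning statement and the observation that a sub-polynomial ring on the generators of weight $\le w$ has the right dimension in each degree.

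\emph{Case $z_0\in W$.} This is the step I expect to be the main obstacle. To apply Theorem \ref{thm4.1} we need $z_0\notin W$, which amounts to $\ell(z_0)=0$ holding automatically for every functional killing $W$. I would first try to prove $z_0\notin W$ directly; indeed, if $z_0\in W$, then $V=Q$ and Theorem \ref{thm4.1} would say that $z_0,\{y_i\}$ are free generators. But $z_0$ is a polynomial in the $y_i$ of lower weight, since $z_0\in W$ lies in the subalgebra generated by lower weights. This contradicts algebraic independence. So this case cannot occur, and the proof reduces to the first case.
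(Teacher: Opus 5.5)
Your proposal is correct. Both your argument and the paper's reduce to Theorem~\ref{thm4.1}, but you transfer freeness to the $t_{n,j}$ differently.

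\textbf{The paper's route.} In the free algebra $K[z_0,\{y_i\}]$, the space $\cZ_{>0}\cZ_{>0}$ is spanned by monomials of degree at least two. Hence the classes of $z_0$ and the $y_i$ form a basis of $\cZ_{>0}/\cZ_{>0}\cZ_{>0}$. This gives at once that $z_0\notin W$ and that there are exactly $b_n$ generators in each weight $n$. The paper then writes $\mathbf{t}_n=\mathbf{A}_n\mathbf{u}_n+\mathbf{P}_n(\mathbf{u}_{<n})$ and builds mutually inverse triangular substitutions $\Phi,\Psi$ between polynomial rings.

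\textbf{Your route.} You first rule out $z_0\in W$ by contradiction with Theorem~\ref{thm4.1}. This is legitimate: that theorem has no hypothesis excluding $z_0\in W$. Also, in weights below $q-1$ the generation induction never uses $z_0$, so $z_0\in W$ would make $z_0$ a polynomial in the $y_i$ of weight less than $q-1$. You then obtain a free generating set with $b_n$ generators in weight $n$ by extending $\overline{z_0}$ to a basis of $Q_{q-1}$. Finally, you conclude by a graded dimension count: $P_w\to\cZ_w$ is a surjection between spaces of the same finite dimension, hence a bijection. Finiteness holds because $b_n\le d_n$ and all degrees are positive.

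\textbf{Comparison.} Your count is shorter and avoids inverting the substitution. It costs you a case analysis and relies on finite-dimensionality of the graded pieces. The paper's argument is more explicit and produces the inverse change of variables. Its monomial observation is the same fact as your contradiction argument, stated directly, so it makes your case split unnecessary.

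\textbf{A simplification.} Freeness for $\cZ^{\textup{alg}}_{\le w}$ needs no separate restricted count. A subfamily of an algebraically independent family is algebraically independent, and together with your spanning argument this already gives the claim.
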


\begin{proof}
Retain the notation $z_0$ and $\{y_i\}_{i \in I}$ as in the statement of Theorem \ref{thm4.1}.
We have proved that
\[
\cZ = K[z_0, \{y_i\}_{i \in I}] \quad\text{freely}.
\]
In this polynomial algebra, $\cZ_{>0}$ is spanned by all nonconstant monomials, and $\cZ_{>0}\cZ_{>0}$ is spanned by monomials containing at least two generators, counted with multiplicity. 
It follows that
\[
\left\{ \overline{z_0} \right\} \cup \left\{ \overline{y_i} \right\}_{i \in I} \text{ is a $K$-basis of } \cZ_{>0}/\cZ_{>0}\cZ_{>0},
\]
where the bars denote classes in $\cZ_{>0}/\cZ_{>0}\cZ_{>0}$.
Therefore, for each weight $n \ge 1$, if we rename the weight-$n$ elements in $\{ z_0 \} \cup \{y_i\}_{i \in I}$ as $u_{n,1},\ldots,u_{n,b_n^\prime}$, then their classes $\overline{u_{n,1}},\ldots,\overline{u_{n,b_n^{\prime}}}$ in $Q_n$ form a $K$-basis of $Q_n$. 
Thus, we have $b_{n}^{\prime} = b_n$.

Let $\mathbf{u}_n$ and $\mathbf{t}_n$ denote the column vectors
\[
\mathbf{u}_n=\left(u_{n, 1}, \ldots, u_{n, b_n}\right)^{\top}, \quad \mathbf{t}_n=\left(t_{n, 1}, \ldots, t_{n, b_n}\right)^{\top} .
\]
Since $\{ \overline{u_{n,1}},\ldots,\overline{u_{n,b_n}} \}$ and $\{ \overline{t_{n,1}},\ldots,\overline{t_{n,b_n}} \}$ are two bases of $Q_n$, there is an invertible matrix
\[
\mathbf{A}_n \in \operatorname{GL}_{b_n}(K)
\]
such that
\[
\mathbf{t}_n-\mathbf{A}_n \mathbf{u}_n \in \left( \cZ_{>0}\cZ_{>0} \right)^{b_n}.
\]
Recall that $\cZ_{>0}\cZ_{>0}$ is spanned by monomials containing at least two generators $u_{w,j}$, counted with multiplicity. 
Each entry of $\mathbf{t}_n-\mathbf{A}_n \mathbf{u}_n$ belongs to $\cZ_n$, so the corresponding monomials use only generators $u_{w,j}$ with $w < n$.
Therefore,
\begin{equation}\label{4.13}
\mathbf{t}_n=\mathbf{A}_n \mathbf{u}_n+\mathbf{P}_n\left(\mathbf{u}_{<n}\right),
\end{equation}
where $\mathbf{P}_n\left(\mathbf{u}_{<n}\right)$ is a column vector of polynomials in generators $u_{w,j}$ of weights below $n$ with coefficients in $K$.
Since $\mathbf{A}_n$ is invertible, we can solve for $\mathbf{u}_n$:
\begin{equation}\label{4.14}
\mathbf{u}_n=\mathbf{A}_n^{-1}\Big(\mathbf{t}_n-\mathbf{P}_n\left(\mathbf{u}_{<n}\right)\Big). 
\end{equation}
Starting from the lowest weights, equation \eqref{4.14} recursively expresses every $u_{n,j}$ as a polynomial in $t_{w,j}$ ($1 \le w \le n$, $1 \le j \le b_w$). 
This proves that $t_{n,j}$ $(n \in \bbZ_+, 1 \le j \le b_n)$ generate the algebra $\cZ$. 

To prove algebraic independence, perform the same substitutions with independent formal variables $T_{n,j}$, $X_{n,j}$.
Let
\[
\Phi: K\left[\mathbf{T}_n: n \ge 1\right] \rightarrow K\left[\mathbf{X}_n: n \ge 1\right]
\]
be the $K$-algebra homomorphism defined by
\[
\Phi\left(\mathbf{T}_n\right)=\mathbf{A}_n \mathbf{X}_n+\mathbf{P}_n\left(\mathbf{X}_{<n}\right).
\]
Define a $K$-algebra homomorphism in the opposite direction $\Psi: K\left[\mathbf{X}_n: n \ge 1\right] \rightarrow K\left[\mathbf{T}_n: n \ge 1\right]$ recursively by
\begin{equation}\label{4.16}
\Psi\left(\mathbf{X}_n\right)=\mathbf{A}_n^{-1}\Big(\mathbf{T}_n-\mathbf{P}_n\big(\Psi\left(\mathbf{X}_{<n}\right)\big)\Big).
\end{equation}
This is well-defined: by induction on $n$ the right-hand side of \eqref{4.16} produces a column vector with entries in $K[\mathbf{T}_1,\ldots,\mathbf{T}_{n}]$.
Equation \eqref{4.16} immediately gives
\[
\Psi \Phi\left(\mathbf{T}_n\right)=\mathbf{T}_n.
\]
Induction on $n$ yields
\[
\Phi \Psi\left(\mathbf{X}_n\right)=\mathbf{X}_n.
\]
Thus $\Phi$ and $\Psi$ are inverse isomorphisms of polynomial rings.
Let $\iota$ denote the established isomorphism $K\left[\mathbf{X}_n : n \ge 1\right] \xrightarrow{\sim} \mathcal{Z}$, $X_{n,j} \mapsto u_{n,j}$. 
The map $\iota \circ \Phi$ sends each formal variable $T_{n,j}$ to the corresponding $t_{n,j}$. 
Hence these elements $t_{n,j}$ freely generate $\mathcal{Z}$.

Finally, since $t_{n,j} \in \cZ_{n}$, we have 
\[
K\left[t_{n,j}: 1 \le n \le w, 1 \le j \le b_n \right] \subset \cZ_{\le w}^{\textup{alg}}.
\]
Since $\overline{t_{m,1}}, \ldots, \overline{t_{m,b_m}}$ form a $K$-basis of $Q_m$, for every $x \in \cZ_{m}$, there exist $c_1,\ldots,c_{b_m} \in K$ such that
\[
x - \sum_{j=1}^{b_m} c_{j}t_{m,j} \in \sum_{r=1}^{m-1} \cZ_{r}\cZ_{m-r}.
\] 
Induction on $m$ gives $\cZ_{m} \subset K\left[t_{n,j}: 1 \le n \le w, 1 \le j \le b_n \right]$ for any $m=0,1,\ldots,w$.
Therefore,
\[
\cZ_{\le w}^{\textup{alg}} = K\left[t_{n,j}: 1 \le n \le w, 1 \le j \le b_n \right] \quad\text{freely},
\]
as asserted.
\end{proof}

\begin{lemma}\label{lem4.3}
The integer $a_n$ defined by \eqref{def_an} and the integer $b_n$ defined in the statement of Theorem \ref{thm4.2} are the same: $a_n=b_n$ for any $n \ge 1$. 
\end{lemma}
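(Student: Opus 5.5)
Theorem \ref{thm4.2} shows that $\cZ$ is a graded polynomial algebra $K[t_{n,j}]$ in which the weight-$n$ generators number exactly $b_n$. So we can compare Hilbert series. Since every monomial in the $t_{n,j}$ is homogeneous, the graded pieces satisfy
\[
\sum_{w=0}^{\infty} \dim_K \cZ_w \, X^w = \prod_{n=1}^{\infty} \frac{1}{(1-X^n)^{b_n}}
\]
as formal power series in $\bbZ\llbracket X \rrbracket$. The right-hand side is well defined because each $b_n$ is finite: $b_n \le \dim_K \cZ_n = d_n < \infty$. It also has only finitely many contributions in each degree.

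The first step is to justify this identity. By freeness in Theorem \ref{thm4.2}, the monomials $\prod t_{n,j}^{e_{n,j}}$ form a $K$-basis of $\cZ$. The monomials of total weight $w$, where weight is $\sum n e_{n,j}$, lie in $\cZ_w$. Because $\cZ=\bigoplus_w \cZ_w$ by Theorem \ref{thm_Chang}, these weight-$w$ monomials form a basis of $\cZ_w$. Counting them gives the coefficient of $X^w$ in the product above.

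The second step is to apply Theorem \ref{thm_CCMIKLNP}. It says $\dim_K\cZ_w=d_w$, so the left-hand side equals $\frac{1-X^q}{1-X-\cdots-X^q}$. Comparing with \eqref{def_an} gives
\[
\prod_{n\ge1}(1-X^n)^{-a_n}=\prod_{n\ge1}(1-X^n)^{-b_n}.
\]

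The final step is a uniqueness statement for such product expansions: if $\prod_n (1-X^n)^{-c_n}=1$ with $c_n\in\bbZ$, then every $c_n=0$. To prove it, suppose not and take the smallest $n$ with $c_n\ne0$. Then the coefficient of $X^n$ on the left is $c_n$, since factors $(1-X^m)^{-c_m}$ with $m>n$ contribute only in degrees $\geq m > n$. This is a contradiction. Applying the statement to $c_n=a_n-b_n$ gives $a_n=b_n$. As a byproduct, this shows $a_n\ge 0$, which the introduction claimed.

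There is no real obstacle here. The only point needing care is that the integers $a_n$ are well defined and unique in the first place. This follows by the same induction on degree: at degree $n$, the exponent $a_n$ is determined linearly by the coefficient of $X^n$ once $a_1,\dots,a_{n-1}$ are fixed.
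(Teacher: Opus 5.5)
Your proposal is correct and follows the paper's proof: freeness from Theorem \ref{thm4.2} together with the grading gives $\sum_{w} (\dim_K \cZ_w) X^w = \prod_{n} (1-X^n)^{-b_n}$, and then Theorem \ref{thm_CCMIKLNP} plus uniqueness of such product expansions yields $a_n=b_n$. The only difference is that you spell out the uniqueness step (taking the smallest index with $a_n \neq b_n$ and comparing coefficients of $X^n$), which the paper leaves implicit.
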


\begin{proof}
Retain the notation $t_{n,j}$ as in Theorem \ref{thm4.2}. 
We have proved that 
\[
\cZ = K[t_{n,j}: n \in \bbZ_+, 1 \le j \le b_n] \quad\text{freely}.
\] 
Therefore, the weight-$w$ monomials in these generators $t_{n,j}$ form a $K$-linear basis of $\cZ_w$ for each $w$. 
Consequently,
\[
\prod_{n=1}^{\infty} \frac{1}{(1-X^n)^{b_n}} = \sum_{w=0}^{\infty} \left(\dim_K \cZ_w\right) X^w \quad \text{as formal power series in } \bbZ\llbracket X \rrbracket.
\]
By Theorem \ref{thm_CCMIKLNP}, we obtain
\[
\prod_{n=1}^{\infty} \frac{1}{(1-X^n)^{b_n}} = \frac{1-X^{q}}{1-X-X^2-\cdots-X^q}.
\]
This identity uniquely determines the integers $b_n$ and implies $a_n=b_n$.
\end{proof}

\section{An explicit polynomial basis}\label{sec_5}

In this section, we use an elementary determinant argument to prove that the explicitly constructed elements $z_{n,j}$ in \eqref{def_znj} meet the requirements of Theorem \ref{thm4.2}.
This leads to the following proof of our main theorem.

\begin{proof}[Proof of Theorem \ref{thm_main}]
For any weight $n \ge 1$, by Theorem \ref{thm_CCMIKLNP} and Lemma \ref{lem4.3}, 
\[
\dim_{K} \cZ_n = d_n, \quad \dim_K \left( \cZ_n \Big/ \sum_{r=1}^{n-1} \cZ_r\cZ_{n-r} \right) = a_n.
\]
Therefore, $\dim_{K} \sum_{r=1}^{n-1} \cZ_r\cZ_{n-r} = d_n - a_n$.
By Theorem \ref{thm_FpSpan}, 
\begin{equation}\label{5.1}
\left\{ \LiA(\fs_1)\LiA(\fs_2) : \fs_1,\fs_2 \in \cI_{>0}, \wt(\fs_1)+\wt(\fs_2)=n \right\}
\end{equation}
is a spanning set of the $K$-linear space $\sum_{r=1}^{n-1} \cZ_r\cZ_{n-r}$.
Therefore, there is a subset of \eqref{5.1} of cardinality $d_n-a_n$, denoted by $\{ \gamma_{n,a_n+1}, \gamma_{n,a_n+2}, \ldots, \gamma_{n,d_n} \}$, that forms a $K$-basis of $\sum_{r=1}^{n-1} \cZ_r\cZ_{n-r}$.

By Corollary \ref{cor2.4}, there exist $c_{i,j} \in \bbF_p[D_1]_{<2^{n}}$ (for $1 \le i \le d_n$ and $a_n < j \le d_n$) such that 
\[
\gamma_{n,j} = \sum_{i=1}^{d_n} c_{i,j}\LiA(\ft_{n,i}),
\]
where
\[
\ft_{n,1} \prec \ft_{n,2} \prec \cdots \prec \ft_{n,d_n} 
\]
are the Todd--Thakur tuples of weight $n$ listed in ascending order with respect to the (depth, lex)-order.
For $1 \le i \le d_n$ and $1 \le j \le a_n$, define $c_{i,j}=D_1^{i\cdot4^{nj}}$. 
Then, by \eqref{def_znj} and Remark \ref{rmk1.3}, 
\[
z_{n,j} = \sum_{i=1}^{d_n} c_{i,j}\LiA(\ft_{n,i}).
\] 

Now, consider the matrix $C=(c_{i,j})_{1 \le i,j \le d_n} \in \operatorname{Mat}_{d_n}(\bbF_p[D_1])$.
Expanding the determinant $\det C$ along the first $a_n$ columns gives
\[
\det C = \sum_{\substack{1 \le i_1,\ldots,i_{a} \le d \\ i_1,\ldots,i_{a} \text{ distinct} } } \epsilon_{\boldsymbol{i}} \Delta_{\boldsymbol{i}} D_1^{i_1 \cdot 4^n + i_2 \cdot 4^{2n} + \cdots + i_{a} \cdot 4^{an}},
\]
where $a=a_n$, $d=d_n$, $\epsilon_{\boldsymbol{i}} \in \{ \pm 1\}$, and 
\[
\Delta_{\boldsymbol{i}} = \det\Big( c_{i,j} \Big)_{\substack{i \in \{1,\ldots,d\} \setminus \{i_1,\ldots,i_a\} \\ a < j \le d }}.
\]
Since $c_{i,j} \in \bbF_p[D_1]_{<2^n}$ for $a_n < j \le d_n$, and $2^n \cdot (d_n-a_n) < 4^{n}$, we have
\[
\Delta_{\boldsymbol{i}} \in \bbF_p[D_1]_{<4^n}.
\]
Since $i_1,\ldots, i_{a_n} \le d_n < 4^n$, the integers
\[
N_{\boldsymbol{i}} \coloneq i_1 + i_2 \cdot 4^{n} + \cdots + i_{a_n} \cdot 4^{(a_n-1)n}
\]
are distinct for distinct tuples $\boldsymbol{i}=(i_1,\ldots,i_{a_n})$. 
Consequently, the powers of $D_1$ occurring in the $\boldsymbol{i}$-th summand lie in the interval 
\[
\Big[4^nN_{\boldsymbol{i}}, 4^n\left( N_{\boldsymbol{i}} + 1\right) \Big).
\]
These intervals are pairwise disjoint. 
Thus cancellation between different summands is impossible.
At least one $\Delta_{\boldsymbol{i}}$ is nonzero, because $\gamma_{n,a_n+1}, \gamma_{n,a_n+2}, \ldots, \gamma_{n,d_n}$ are linearly independent over $K$ by definition.
Therefore, 
\[
\det C \neq 0.
\]
Hence $z_{n,1},\ldots,z_{n,a_n}$, $\gamma_{n,a_n+1}, \gamma_{n,a_n+2}, \ldots, \gamma_{n,d_n}$ form a $K$-basis of $\cZ_n$.
It follows that $\overline{z_{n,1}}, \ldots, \overline{z_{n,a_n}}$ form a $K$-basis of $\cZ_n/\sum_{r=1}^{n-1} \cZ_r\cZ_{n-r}$.
Thus, Theorem \ref{thm4.2} implies Theorem \ref{thm_main}.
\end{proof}

\vspace*{3mm}
\begin{flushright}
\begin{minipage}{158mm}\sc\footnotesize
School of Mathematical Sciences, Xiamen University, Fujian, China \\
{\it E-mail addresses}: \href{mailto:lilaimath@gmail.com}{{\tt lilaimath@gmail.com}}, \href{mailto:lilai@xmu.edu.cn}{{\tt lilai@xmu.edu.cn}} 
\vspace*{3mm}
\end{minipage}
\end{flushright}

\end{document}